\author{Claire Amiot}
\address{Institut Fourier, 100 rue des maths, 38402 Saint Martin d'H\`eres, Université Grenoble Alpes, Institut Universitaire de France}
\email{claire.amiot@univ-grenoble-alpes.fr}
\newtheorem{theorem}{Theorem}[section]
\newtheorem{lemma}[theorem]{Lemma}
\newtheorem{corollary}[theorem]{Corollary}
\newtheorem{proposition}[theorem]{Proposition}
\theoremstyle{remark}
\newtheorem{remark}[theorem]{Remark}
\theoremstyle{definition}
\newtheorem{definition}[theorem]{Definition}
\definecolor{dark-green}{RGB}{14,150,2}
\newcommand{\gpoint}{\color{dark-green}{\circ}}
\newcommand{\rpoint}{{\red \bullet}}
\newcommand{\cross}{{\sf \red x}}
\newcommand{\bZ}{\mathbb Z}
\newcommand{\grading}{\mathbf{n}}
\newcommand{\surf}{\mathcal S}
\newcommand{\cD}{\mathcal{D}}
\DeclareMathOperator{\proj}{proj}
\DeclareMathOperator{\gr}{gr}
\title[Derived categories of skew-gentle algebras and orbifolds]{Indecomposable objects in the derived category of a skew-gentle algebra using orbifolds}
\thanks{The author is supported by the French ANR grant CHARMS (ANR-19-CE40-0017)and by the Institut Universitaire de France}
\begin{document}
\maketitle

\begin{abstract} Skew-gentle algebras over a field of characteristic $\neq 2$ are skew-group algebras
 of certain gentle algebras endowed with a $\bZ_2$-action. Using the topological description of Opper, Plamondon and Schroll in \cite{OpperPlamondonSchroll} for the indecomposable objects of the derived category of any gentle algebra, one obtains here a complete description of indecomposable objects in the derived category of any skew-gentle algebra in terms of curves on an orbifold surface.The results presented here are complementary to the ones in \cite{LabardiniSchrollValdivieso}. First, we obtain a complete classification of indecomposable objects and not of ``homotopy strings'' and ``homotopy bands'' which are not always indecomposable. Second, the classification obtained here does not use the combinatorial description of \cite{BekkertMarcosMerklen}, but topological arguments coming from the double cover of the orbifold surface constructed in \cite{AmiotBrustle}. 
 
  \end{abstract}
\tableofcontents

\section{Introduction}

Gentle algebras were introduced in the 80's by Assem and Skowronski as a generalisation of tilted algebras of type $\mathbb A$ and $\widetilde{\mathbb A}$. These algebras are tame and derived tame, and the indecomposable objects of their derived category have ben described explicitely in combinatorial terms in \cite{BekkertMerklen} and \cite{BurbanDrozd}. More recently, a topological approach has been used to describe their representation theory in \cite{OpperPlamondonSchroll, BaurCoelho} (see also \cite{HaidenKatzarkovKontsevich}). More precisely, a marked surface together with a collection of arcs have been attached to any gentle algebra. Using this data, Opper, Plamondon and Schroll have obtained in \cite{OpperPlamondonSchroll} a description of indecomposable objects of the derived category of a gentle algebra in terms of graded curves on the corresponding surface. This new description has been obtained by translating the combinatorial description in topological terms. 

Skew-gentle algebras, introduced in the 90's by Geiss and de la Pe\~na in \cite{GeissdelaPena} can both be seen as a generalisation of gentle algebras, and as skew-group gentle algebras. They are also derived tame \cite{BurbanDrozd}. Using their description as skew-group algebras, one can attach to each skew-gentle algebra $A$, a gentle algebra $\widetilde{A}$ with an action of $\mathbb Z_2$. Therefore, using the topological description in \cite{OpperPlamondonSchroll}, one associates in \cite{AmiotBrustle} to each skew-gentle algebra $A$, a marked surface $\widetilde{\surf}$ with a collection of arcs invariant under the action of a homeomorphism $\sigma$ of order $2$. It becomes then natural to associate to $A$ the orbifold surface $\surf:=\widetilde{S}/\sigma$. The aim of this paper is to use this topological model to obtain a complete description of indecomposable objects of $\cD^{\rm b}(A)$ in terms of graded curves on the orbifold $\surf$. More precisely the main result of this paper is the following :

\begin{theorem}
Let $k$ be an algebraically closed field of characteristic $\neq 2$. Let $(\surf, M_\rpoint,P_\rpoint,X_\cross,D)$ be a dissected marked orbifold. Let $A$ be the skew-gentle $k$-algebra attached to it. Then the indecomposable objects of the category $\cD^{\rm b}(A)$ are in bijection with the following five sets :
\begin{enumerate}
\item[$(\surf 1)$] $\left\{(\gamma,\grading)\in \pi_1^{\rm orb,gr}(\surf,M_{\gpoint}, P_{\rpoint})\ |\ \gamma\neq \gamma^{-1}\right\}/\sim$ where $(\gamma,\grading)\sim (\gamma^{-1},\grading)$;
\item[$(\surf 2)$] $\left\{ (\gamma,\grading,\epsilon)\in \pi_1^{\rm orb,gr}(\surf,M_{\gpoint}, P_{\rpoint})\times \{\pm 1\}\ |\ \gamma=\gamma^{-1}\right\}$

\item [$(\surf 3)$]$\{ ([\gamma],\mathbf{n},\lambda)\in \pi_1^{\rm orb,free, gr}(\surf)\times k^*\ |\ [\gamma]\neq [\gamma^{-1}]\}/\sim$ where $([\gamma],\mathbf{n},\lambda)\sim ([\gamma^{-1}],\mathbf{n},\lambda^{-1})$;
\item[$(\surf 4)$] $\{ ([\gamma],\mathbf{n},\lambda)\in \pi_1^{\rm orb,free, gr}(\surf)\times k^*\setminus\{\pm 1\}\ |\ [\gamma]= [\gamma^{-1}], \gamma^2\neq 0 \}/\sim$;
\item[$(\surf 5)$] $\{ ([\gamma],\mathbf{n},\epsilon,\epsilon')\in \pi_1^{\rm orb,free, gr}(\surf)\times \{\pm 1\}^2\ |\ [\gamma]= [\gamma^{-1}], \gamma^2\neq 0\}.$
\end{enumerate}
where the set $\pi_1^{\rm orb,gr}(\surf,M_{\gpoint},P_\rpoint)$ is defined in subsection \ref{section4.1}, and where the set $\pi_1^{\rm orb,free, gr}(\surf)$ is defined in subsection \ref{section4.2}.
\end{theorem}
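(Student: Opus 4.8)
The plan is to lift the classification problem from the orbifold $\surf$ to its orientation/branched double cover $\widetilde{\surf}$, where the algebra becomes the honest gentle algebra $\widetilde{A}$, and then to push the known classification of indecomposables of $\cD^{\rm b}(\widetilde A)$ back down using the $\bZ_2$-action and the skew-group algebra formalism. Concretely, since $A$ is the skew-gentle algebra attached to the dissected orbifold and $\widetilde A$ is the gentle algebra attached to the $\sigma$-invariant dissected marked surface $\widetilde\surf$ constructed in \cite{AmiotBrustle}, I would first invoke the theorem of Opper, Plamondon and Schroll \cite{OpperPlamondonSchroll}: the indecomposable objects of $\cD^{\rm b}(\widetilde A)$ are in bijection with graded curves on $\widetilde\surf$, namely graded homotopy classes of non-contractible arcs (the string-type objects) together with graded primitive closed curves equipped with a scalar $\lambda\in k^*$ (the band-type objects). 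The homeomorphism $\sigma$ induces an involution on this set of graded curves, and the whole strategy is to read off the indecomposables of $\cD^{\rm b}(A)$ as the ``$\sigma$-orbits, corrected by representation-theoretic data of $\bZ_2$''.

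The representation-theoretic engine is the skew-group algebra correspondence. Since $A=\widetilde A \rtimes \bZ_2$ over a field of characteristic $\neq 2$, there is a well-understood relationship (Reiten–Riedtmann type theory, descended to derived categories) between indecomposables of $\cD^{\rm b}(\widetilde A)$ and those of $\cD^{\rm b}(A)$: an indecomposable $X$ over $\widetilde A$ that is \emph{not} isomorphic to its $\sigma$-twist $X^\sigma$ induces a single indecomposable over $A$, and the pair $\{X,X^\sigma\}$ is identified (this gives the $\sim$-quotients in $(\surf 1)$ and $(\surf 3)$); whereas an indecomposable $X$ that \emph{is} $\sigma$-stable, $X\cong X^\sigma$, splits into a direct sum of two non-isomorphic $A$-indecomposables, indexed by the two characters $\epsilon=\pm1$ of $\bZ_2$ (this produces the $\{\pm1\}$-decorations in $(\surf 2)$, $(\surf 4)$ and the $\{\pm1\}^2$ in $(\surf 5)$). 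Thus the five families are precisely the combinatorial bookkeeping of (a) curve type on the base orbifold --- arcs in $(\surf 1)$–$(\surf 2)$ versus closed curves in $(\surf 3)$–$(\surf 5)$; (b) whether the lifted curve is $\sigma$-stable; and (c) for bands, the extra subtlety that the $\lambda=\pm1$ band modules behave differently (forcing $\lambda\in k^*\setminus\{\pm1\}$ in $(\surf 4)$ and replacing $\lambda$ by the pair $(\epsilon,\epsilon')$ in the degenerate case $(\surf 5)$).

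The key translation step is to match $\sigma$-orbits of graded curves on $\widetilde\surf$ with graded curves on the orbifold $\surf$ in the orbifold fundamental groupoid sense: a $\sigma$-stable curve on $\widetilde\surf$ that meets the branch locus descends to a curve on $\surf$ passing through orbifold points, which is exactly what $\pi_1^{\rm orb,gr}(\surf,M_{\gpoint},P_\rpoint)$ and $\pi_1^{\rm orb,free,gr}(\surf)$ (defined in \ref{section4.1} and \ref{section4.2}) are designed to encode. Here I must carefully verify that the condition ``$\gamma=\gamma^{-1}$'' in $(\surf 2)$ corresponds on the cover to $\sigma$-stability of the arc, that ``$[\gamma]=[\gamma^{-1}]$'' together with $\gamma^2\neq 0$ in $(\surf 4)$–$(\surf 5)$ captures a $\sigma$-stable primitive loop that is not itself a square (so its preimage is a single loop traversed twice rather than two disjoint loops), and that the gradings $\grading$ transport correctly between the two surfaces under the covering.

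The main obstacle I expect is the band case, specifically the analysis underlying the split into families $(\surf 3)$, $(\surf 4)$ and $(\surf 5)$. The difficulty is twofold. First, determining when a band object $B(\gamma,\lambda)$ over $\widetilde A$ is $\sigma$-stable requires understanding how $\sigma$ acts on the scalar parameter $\lambda$ as well as on the underlying closed curve, and the inversion $\lambda\mapsto\lambda^{-1}$ appearing in the $(\surf 3)$ equivalence must be traced to the orientation-reversal of the loop. Second, and more delicately, the degenerate subcase where the $\sigma$-stable band has $\lambda=\pm1$ fails to give a simple two-character splitting: instead one must analyse the endomorphism algebra of the induced/restricted object directly, which is where the pair $(\epsilon,\epsilon')\in\{\pm1\}^2$ in $(\surf 5)$ comes from, and reconcile this with the exclusion $\lambda\in k^*\setminus\{\pm1\}$ in $(\surf 4)$. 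Handling this will require explicit computation of the relevant Hom-spaces and idempotents in the skew-group algebra, using the double cover of \cite{AmiotBrustle} to keep the topological and algebraic pictures aligned, and checking that no indecomposable is counted twice or omitted across the five families.
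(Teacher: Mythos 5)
Your overall strategy is the same as the paper's: lift via the branched double cover, apply the Opper--Plamondon--Schroll classification to $\cD^{\rm b}(\widetilde A)$, use the Reiten--Riedtmann dichotomy ($\sigma$-invariant indecomposables split into two $A$-indecomposables indexed by characters of $\bZ_2$, non-invariant ones give one indecomposable per $\sigma$-orbit), and then translate to the orbifold via orbifold fundamental groups. However, your bookkeeping of how the five families arise is incorrect, and the error sits exactly at the step the paper devotes most of Section 4 to. Organizing by (arc vs.\ closed curve) $\times$ ($\sigma$-stable vs.\ not) produces \emph{six} families on the cover, namely the paper's $(\widetilde{\surf}1)$, $(\widetilde{\surf}2)$, $(\widetilde{\surf}3)$ (bands with $[\sigma\gamma]\neq[\gamma],[\gamma^{-1}]$), $(\widetilde{\surf}3')$ ($\sigma$-invariant bands with $[\sigma\gamma]=[\gamma]$, which carry a character $\epsilon$), $(\widetilde{\surf}4)$ and $(\widetilde{\surf}5)$. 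The theorem's family $(\surf 3)$ is \emph{not} just the $\sigma$-orbits of non-invariant bands, as you claim: it is the fusion of $(\widetilde{\surf}3)$ \emph{and} $(\widetilde{\surf}3')$. The mechanism for this fusion is the key point you are missing: a primitive closed curve $[\alpha]$ on $\surf$ that is not in the image of $\Psi$ (it lifts to an open path, not a loop) has $\Psi([\widetilde\gamma])=[\alpha^2]$ for a $\sigma$-invariant primitive $[\widetilde\gamma]$ on $\widetilde\surf$, and the paper's Proposition 4.8(3) matches the two character-summands of the invariant band $B_{([\widetilde\gamma],\grading,\lambda)}$ with the two orbifold band parameters $\pm\lambda'$, where $(\lambda')^2=\lambda$. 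This square-root-of-$\lambda$ correspondence is what absorbs the character decoration of $(\widetilde{\surf}3')$ into the $k^*$ parameter of $(\surf 3)$; it is visible in the paper's example, where a band over a curve meeting the branch cut an odd number of times decomposes into two complexes with parameters $\pm\lambda'$.

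Relatedly, your assertion that the $\{\pm1\}$-decorations of $(\surf 2)$, $(\surf 4)$ and $(\surf 5)$ all come from the character splitting of $\sigma$-stable objects is false for $(\surf 4)$: the parameter there is the band scalar $\lambda\in k^*\setminus\{\pm1\}$, and these objects come from \emph{non}-invariant bands with $[\sigma\gamma]=[\gamma^{-1}]$ (invariance fails because $B^\sigma$ has parameter $\lambda^{-1}\neq\lambda$); only $\lambda=\pm1$ yields invariant objects, and those form $(\surf 5)$. You flag the $\lambda=\pm1$ degeneration as a difficulty, which is right, but the resolution is not a Hom-space or idempotent computation in the skew-group algebra: in the paper it follows formally from the lemma $(B_{([\gamma],\grading,\lambda)})^\sigma\simeq B_{([\sigma\gamma],\grading\circ\sigma,\lambda)}$ (note that $\sigma$ does not touch $\lambda$; the inversion $\lambda\mapsto\lambda^{-1}$ enters only through the OPS isomorphism $B_{([\gamma],\grading,\lambda)}\simeq B_{([\gamma^{-1}],\grading,\lambda^{-1})}$), combined with the topological Proposition 4.8 and the gradability criterion $w_\eta([\gamma])=0$ (Proposition 4.5), whose well-definedness on the orbifold also needs proof and which your proposal only gestures at. Without these ingredients -- in particular the treatment of curves outside ${\rm Im}\,\Psi$ -- your plan as stated would not reproduce the theorem's five sets.
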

 Note that using a similar topological model, a description of certain objects has been already obtained in \cite{LabardiniSchrollValdivieso}. Translating the combinatorial description of \cite{BekkertMarcosMerklen}  of the derived category of a skew-gentle algebra, the authors obtain a topological description of `homotopy strings' and `homotopy bands' in $\cD^{\rm b}(A)$. The description is however quite different, since homotopy strings and homotopy bands are sometimes indecomposable objects, and sometimes not. Furthermore the techniques used in the present paper are completely different since one does not use  Bekkert-Marcos-Merklen's description. The strategy is as follows. We first use results of Reiten and Riedtmann \cite{ReitenRiedtmann} on skew-group algebras in order to obtain a description of indecomposable objects of $\cD^{\rm b}(A)$ in terms of those of $\cD^{\rm b}(\widetilde{A})$ where $\widetilde{A}$ is the $\bZ_2$-gentle algebra attached to $A$. We then use the topological description of \cite{OpperPlamondonSchroll} to obtain a description of indecomposables of $\cD^{\rm b}(A)$ in terms of the surface $\widetilde{\surf}$. Finally, we use topological arguments on fundamental groups of an orbifold surface to obtain a description in terms of the orbifold $\surf$. Similar arguments have been used in \cite{AmiotPlamondon} to get a complete description of indecomposable objects of the cluster category associated with a marked surface with punctures.

\medskip

The plan of the paper is as follows. In Section 2, we recall results of \cite{OpperPlamondonSchroll, AmiotBrustle} relating a dissected surface (resp. dissected orbifold) with a gentle (resp. skew-gentle) algebra. We then recall the topological description of the indecomposable objects of $\cD^{\rm b}(\widetilde{A})$ due to \cite{OpperPlamondonSchroll}, and deduce a description of the objects in $\cD^{\rm b}(A)$ in term of the two-folded cover $\widetilde{\surf}$. In section 4, we introduce orbifold fundamental groups, to obtain a description of the indecomposable in terms of curves in $\surf$. We end the paper by a detailed example.

\subsection*{Acknowledgement}
The author would like to thank Thomas Br\"ustle for interesting discussions on this project.

\section{Skew-gentle algebras and orbifolds}

Let $k$ be a field.

We start the paper by recalling the topological model attached to the derived category of gentle and skew-gentle algebras.

 \subsection{Skew-gentle algebras and $\cross$-dissections}

\begin{definition}
A {\em gentle pair} is a pair $(Q,I)$ given by a quiver $Q$ and  a subset $I$ of paths of length 2 in $Q$ such that 
\begin{itemize}
\item for each $i\in Q_0$, there are at most two arrows with source $i$, and at most two arrows with target $i$;
\item for each arrow $\alpha:i\to j$ in $Q_1$, there exists at most one arrow $\beta$ with target $i$ such that $\beta\alpha\in I$ and at most one arrow $\beta'$ with target $i$ such that $\beta'\alpha\notin I$;

\item for each arrow $\alpha:i\to j$ in $Q_1$, there exists at most one arrow $\beta$ with source $j$ such that $\alpha\beta\in I$ and at most one arrow $\beta'$ with source $j$ such that $\alpha\beta'\notin I$.
\item the algebra $A(Q,I):=kQ/I$ is finite dimensional.
\end{itemize}
A $k$-algebra is {\em gentle} if it admits a presentation $A=kQ/I$ where $(Q,I)$ is a gentle pair.
\end{definition}

 \begin{definition}
A \emph{skew-gentle triple} $(Q,I,{\rm Sp})$ is the data of  a quiver $Q$,  a subset $I$ of paths of length two in $Q$, and  a subset ${\rm Sp}$ of loops in $Q$ (called `\emph{special loops}') such that $(Q, I\amalg  \{e^2, e\in {\rm Sp}\})$ is a gentle pair. 
In this case, the algebra $$A(Q,I,{\rm Sp}):= kQ/\langle I\amalg \{e^2-e,e\in {\rm Sp}\}\rangle,$$ is  called a  {\em skew-gentle algebra.}
\end{definition}

A \emph{$\cross$-marked surface} $(\surf, M_\rpoint,P_\rpoint, X_\cross)$ is the data of  
\begin{itemize}
\item an orientable closed smooth surface $\surf$ with non empty boundary;
\item a finite set of marked points $M_\rpoint$ on the boundary, such that there is at least one marked point on each boundary component;
\item two finite sets $P_\rpoint$ and $X_\cross$ of marked points in the interior of $\surf$.

\end{itemize} 
A curve on the boundary of $\surf$ intersecting marked points only on its endpoints is called a \emph{boundary segment}.

An \emph{arc} on $(\surf, M_\rpoint, P_\rpoint)$ is a non contractible curve $\gamma:[0,1]\to \surf$ such that $\gamma_{|(0,1)}$ is injective and $\gamma(0)$ and $\gamma(1)$ are points in $M_\rpoint\cup P_\rpoint\cup X_\cross$. Each arc is considered up to isotopy (fixing endpoints).

\begin{definition}\label{def dissection}
A \emph{$\cross$-dissection} is a collection $D=\{\gamma_1,\ldots,\gamma_s\}$ of arcs cutting $\surf$ into polygons with exactly one side being a boundary segment, and such that each $\cross$ in $X_\cross$ is the endpoint of exactly one arc in $D$.

Two dissected surfaces $(\surf,M_\rpoint,P_\rpoint,X_\cross,D)$ and $(\surf',M'_\rpoint,P'_\rpoint,X'_\cross, D')$ are called homeomorphic if there exists an orientation preserving homeomorphism $\Phi:\surf\to \surf'$ such that $\Phi(M_\rpoint)=M'_\rpoint$, $\Phi(P_\rpoint)=P'_\rpoint$, $\Phi(X_\cross)=X'_\cross$ and $\Phi(D)=D'$.
\end{definition} 

 Following \cite{OpperPlamondonSchroll} and \cite{AmiotBrustle} (see also \cite{BaurCoelho} and \cite{LabardiniSchrollValdivieso}), one can associate to the dissection $D$ a skew-gentle triple $(Q,I,{\rm Sp})$, and thus a skew-gentle algebra $A(D):=A(Q,I,{\rm Sp})$.

\begin{itemize}
\item
The vertices of $Q$ are in bijection with the arcs of $D$;
\item
Given $i$ and $j$ arcs in $D$, there is one arrow
  \scalebox{0.8}{
  \begin{tikzpicture}
  \node (I) at (0,0) {$i$};
  \node (J) at (2,0) {$j$};
  \draw[thick, ->] (I)--node[fill=white, inner sep=0pt]{$\alpha$}(J);
  \end{tikzpicture}} in $Q$ whenever the arcs $i$ and $j$ have a common endpoint in $M_\rpoint\cup P_\rpoint\cup X_\cross$ and when $i$ is immediately followed by the arc $j$ in the clockwise order around their common endpoint (note that if a marked point is the endpoint of exactly one arc, we obtain a loop in the quiver);
  
   \item The set of special loops ${\rm Sp}$ corresponds to the loops $i\to i$ where $i$ is the unique arc ending in each point $\cross$ in $X_\cross$. 
  
 \item If $i$, $j$, and $k$ have a common endpoint in $M_\rpoint\cup P_\rpoint$, and are consecutive arcs following the clockwise  order around $\rpoint$, then we have $\beta\alpha\in I$, where $\alpha$ (resp. $\beta$) is the arrow corresponding to to the angle $j\to i$ (resp. $k\to j$);

 \end{itemize}
 
 We refer to Subsection \ref{section5.1} for an example.
 
 \begin{proposition}\cite{AmiotBrustle,LabardiniSchrollValdivieso}
 The assignement $D\to A(D)$ maps $\cross$-dissections to skew-gentle algebras, and all skew-gentle algebras are obtained in this way. If moreover, $A(D)$ and $A(D')$ are isomorphic skew-gentle algebras which are not gentle, then the dissected surfaces $(\surf,M_\rpoint, P_\rpoint, X_\cross,D)$ and $(\surf',M'_\rpoint, P'_\rpoint, X'_\cross,D')$ are homeomorphic.
 \end{proposition}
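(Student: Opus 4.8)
The plan is to treat the three assertions separately. For the first—that $A(D)$ is genuinely skew-gentle—I would argue purely locally. Around each point of $M_\rpoint \cup P_\rpoint \cup X_\cross$ the arcs of $D$ incident to it are cyclically ordered by the orientation of $\surf$, and the construction draws an arrow exactly between clockwise-consecutive arcs; since each complementary region is a polygon with a single boundary segment, at most two arcs meet on each side of any endpoint, which yields the two-in/two-out condition and the relation conditions of the gentle axioms. The paths $e^2$ at the loops coming from the $\cross$-points complete $(Q, I \amalg \{e^2 : e \in \mathrm{Sp}\})$ to a gentle pair, and finite-dimensionality follows from compactness of $\surf$ together with finiteness of $D$. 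Thus $(Q,I,\mathrm{Sp})$ is a skew-gentle triple and $A(D)$ is skew-gentle.

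For surjectivity I would run the inverse (ribbon-graph) construction of \cite{OpperPlamondonSchroll}, adapted to special loops. Starting from an abstract skew-gentle triple $(Q,I,\mathrm{Sp})$, one assembles $\surf$ as a ribbon surface: each vertex of $Q$ yields an arc, the cyclic order of arcs around each endpoint is prescribed by the arrows and the relations $I$, and fattening this ribbon graph produces an oriented surface whose complementary regions are polygons with one boundary segment each. The special loops are realized by declaring, at each corresponding univalent gluing point, a point of $X_\cross$ that is the endpoint of exactly one arc, so that $D$ is a $\cross$-dissection with $A(D) \cong A(Q,I,\mathrm{Sp})$.

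The rigidity clause is the substantial part, and here I would pass through the double cover. From $A$ one reconstructs the associated $\bZ_2$-gentle algebra $\widetilde A$ together with its $\bZ_2$-action (the skew-group presentation), and by \cite{OpperPlamondonSchroll} the gentle algebra $\widetilde A$ determines its dissected surface $\widetilde\surf$ up to orientation-preserving homeomorphism. The involution $\sigma$ is recovered from the $\bZ_2$-action, its fixed data is exactly the branch locus lying over the $\cross$-points, and $\surf$ is recovered as the quotient $\widetilde\surf / \sigma$. An isomorphism $A(D) \cong A(D')$ then induces an isomorphism of the pairs $(\widetilde A, \bZ_2)$, hence a $\sigma$-equivariant homeomorphism $\widetilde\surf \to \widetilde\surf'$, which descends to the required homeomorphism $\surf \to \surf'$ matching $M_\rpoint, P_\rpoint, X_\cross$ and $D$.

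The main obstacle I anticipate is the first step of this last argument: recovering the skew-gentle triple, equivalently the marked datum $(\widetilde A, \bZ_2)$, from the bare isomorphism type of $A$. The difficulty is that a special loop $e$ satisfies $e^2 = e$, so the corresponding vertex is not local but splits $k[e]/(e^2 - e) \cong k \times k$ into two primitive idempotents; consequently the Gabriel quiver of $A$ is not $Q$ but a quiver in which every special-loop vertex has been doubled, and one must recognize which pairs of vertices arose from a single $\cross$-point and re-glue them before the surface can be read off. This is exactly where the hypothesis that $A$ is \emph{not} gentle intervenes: a skew-gentle algebra that happens to be isomorphic to a gentle one admits two genuinely different dissected models, so uniqueness can only be expected in the complement of this case. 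Making the re-gluing canonical—showing it depends only on the isomorphism class of $A$ and not on a chosen presentation—and checking that the resulting identification of $\mathrm{Sp}$, hence of $X_\cross$, is forced, is the technical heart of the proof.
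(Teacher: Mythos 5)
First, a point of reference: this proposition is not proved in the paper at all --- it is imported wholesale from \cite{AmiotBrustle} and \cite{LabardiniSchrollValdivieso}, so there is no internal argument to compare yours against, and your proposal must be judged on its own terms. Your first two steps are sound and standard: the local verification of the gentle axioms around each point of $M_\rpoint\cup P_\rpoint\cup X_\cross$, and the inverse ribbon-graph construction (with a puncture met by a unique arc, carrying a loop $e$ with $e^2\in I$, promoted to a $\cross$-point) are exactly the constructions recalled in Section 2 of the paper and in \cite{OpperPlamondonSchroll}.

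The problem is the third step, which you yourself call the substantial part. Your argument reduces rigidity to the claim that the pair $(\widetilde{A},\bZ_2)$ --- equivalently the skew-gentle triple $(Q,I,{\rm Sp})$, since the $\bZ_2$-action is only defined through a chosen presentation by $\epsilon\mapsto 1-\epsilon$ for $\epsilon\in {\rm Sp}$ --- is determined by the isomorphism class of $A$ alone; you then label this claim ``the technical heart'' and stop. But that claim is not a preliminary to the proposition: it is essentially equivalent to it. Once the triple is known up to the equivalence preserving the ribbon structure, the dissected surface is recovered by your own step two; conversely, homeomorphic dissected $\cross$-surfaces yield equivalent triples. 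So what you have written is a reduction of the statement to an equivalent restatement, with the actual content --- proving that a skew-gentle algebra that is not gentle admits an essentially unique skew-gentle presentation, i.e.\ that the doubled vertices of the Gabriel quiver can be re-glued and ${\rm Sp}$ identified canonically, independently of any chosen presentation --- left untouched. This uniqueness of presentation is precisely what \cite{AmiotBrustle} and \cite{LabardiniSchrollValdivieso} establish, and it is where the hypothesis ``not gentle'' must actually do work rather than merely be motivated by the example $k[e]/(e^2-e)\cong k\times k$. A second, smaller gap: even granting the recovery of $(\widetilde{A},\bZ_2)$, the homeomorphism $\widetilde{\surf}\to\widetilde{\surf}'$ that \cite{OpperPlamondonSchroll} produces from an isomorphism $\widetilde{A}\cong\widetilde{A}'$ is not automatically $\sigma$-equivariant; equivariance requires an additional naturality argument before the map can descend to the orbifold quotients together with their $\cross$-points and dissections.
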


\subsection{Two folded covering and $\bZ_2$-actions}
The name skew-gentle introduced by Geiss and de la Pe\~na in \cite{GeissdelaPena} comes from the fact that over a field of characteristic $\neq 2$ any skew-gentle algebra is Morita equivalent to the skew-group algebra of a gentle algebra. Recall that if $\Lambda$ is a $k$-algebra, and $G$ a group acting on $\Lambda$ by automorphism, the algebra $\Lambda G$ is defined as follows

\begin{itemize}
\item $\Lambda G:=\Lambda\otimes _k kG$ as $k$-vector space;
\item the multiplication is  given by the formula $$(\lambda\otimes g).(\lambda'\otimes g'):= \lambda g(\lambda')\otimes gg'\textrm{ for all }\lambda,\lambda'\in \Lambda\textrm{ and } g,g'\in G$$
extended by bilinearity.
\end{itemize}

In the case where the order of $G$ is invertible in $k$, then the representation theory of $\Lambda G$ is closely related with the one of $\Lambda$ (see \cite{ReitenRiedtmann}). From now we assume that the characteristic of $k$ is $\neq 2$.

\begin{proposition}\label{prop::skewgentle=skewgroup}\cite{GeissdelaPena,AmiotBrustle}
Let $(Q,I,{\rm Sp})$ be a skew-gentle triple. There is an action of $\bZ_2$ on $A(Q,I,{\rm Sp})$ sending $\epsilon$ to $(1-\epsilon)$ for each $\epsilon$ in ${\rm Sp}$. With this action, the skewgroup algebra $A\bZ_2$ is Morita equivalent to a gentle algebra $\widetilde{A}=A(\widetilde{Q},\widetilde{I})$. 
\end{proposition}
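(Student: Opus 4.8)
The plan is to prove the two assertions separately: first the well-definedness of the involution $\sigma$, then the identification of the skew-group algebra with a gentle algebra up to Morita equivalence.

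\emph{The $\bZ_2$-action.} I would define $\sigma$ on the generators of $kQ$ by fixing every vertex idempotent $e_j$ and every arrow, and by sending each special loop $\epsilon\colon i\to i$ to $1-\epsilon$, and then check that $\sigma$ descends to $A=A(Q,I,\mathrm{Sp})$. The only thing to verify is that $\sigma$ preserves the defining ideal $\langle I\amalg\{\epsilon^2-\epsilon\}\rangle$. For the relations in $I$ this is automatic once one observes that no special loop can occur as a factor of a relation of $I$: applying the two gentle conditions to the arrow $\epsilon$ inside the gentle pair $(Q,I\amalg\{\epsilon^2\})$ shows that $\epsilon^2$ is the unique length-two path through $i$ involving $\epsilon$ that lies in the gentle ideal, so every other composition $a\epsilon$ or $\epsilon b$ is a non-relation. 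Hence $\sigma$ fixes each element of $I$, while $\sigma(\epsilon^2-\epsilon)=(1-\epsilon)^2-(1-\epsilon)=0$ in $A$ using $\epsilon^2=\epsilon$. Finally $\sigma^2(\epsilon)=1-(1-\epsilon)=\epsilon$, so $\sigma$ is an involutive automorphism and defines the desired $\bZ_2$-action.

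\emph{Reduction to a quiver automorphism.} The key observation for the second assertion is that $A$ is \emph{not} basic: at a special vertex $i$ the element $\epsilon$ is a nontrivial idempotent of $e_iAe_i$, so $e_i$ splits as $\epsilon\oplus(e_i-\epsilon)$. I would therefore replace the idempotents at special vertices by $\epsilon$ and $e_i-\epsilon$, refining the identity to a complete set of primitive orthogonal idempotents and presenting $A$ as a bound quiver algebra $kQ'/I'$ in which $\sigma$ now \emph{permutes} the primitive idempotents: it fixes each non-special $e_j$ and interchanges $\epsilon\leftrightarrow e_i-\epsilon$, and correspondingly permutes the arrows, since for instance an arrow $a$ into $i$ splits as $a\epsilon$ and $a(e_i-\epsilon)$ with $\sigma(a\epsilon)=a(e_i-\epsilon)$. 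In other words, after refining idempotents $\sigma$ is induced by an admissible automorphism of order two of $(Q',I')$, which is exactly the hypothesis needed to compute the skew-group algebra.

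\emph{Computing the skew-group algebra.} Since $\mathrm{char}\,k\neq2$ the group algebra $k\bZ_2$ is semisimple, with central idempotents $f_\pm=\tfrac12(1\pm\sigma)$, and I would invoke the description of skew-group algebras of bound quiver algebras under a group of automorphisms \cite{ReitenRiedtmann} (one may alternatively appeal directly to \cite{GeissdelaPena,AmiotBrustle}). A complete set of primitive orthogonal idempotents of $A\bZ_2$ is read off from the $\sigma$-orbits of vertices of $Q'$: a fixed non-special vertex $j$ yields the two idempotents $e_j\otimes f_\pm$, whereas a swapped pair $\{\epsilon,e_i-\epsilon\}$ is conjugate inside $A\bZ_2$ by the unit $1\otimes\sigma$ and hence fuses to a single vertex. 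The arrows are obtained by decomposing the $k\bZ_2$-modules $\Rad/\Rad^2$ through the induction–restriction rules for $\bZ_2$: a $\sigma$-fixed arrow between two fixed vertices gives two parallel arrows joining equally-signed copies, while a $\sigma$-orbit of two arrows between a fixed vertex and a swapped pair gives one arrow from each signed copy to the fused vertex. This produces an explicit bound quiver $(\widetilde Q,\widetilde I)$ with $A(\widetilde Q,\widetilde I)$ Morita equivalent to $A\bZ_2$.

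\emph{Gentleness and the main obstacle.} It then remains to verify that $(\widetilde Q,\widetilde I)$ is a gentle pair, namely that at most two arrows enter and leave each vertex and that the two relation conditions hold; I expect this to be the main obstacle. Away from the special vertices the local configuration is merely doubled and the gentle conditions are inherited, so the real work is concentrated at the fused vertices coming from special loops, where incident arrows split and recombine and one must check simultaneously the valence bound and that the relations of $I$ pair up correctly into $\widetilde I$. I would carry this out by a finite case analysis of the possible local shapes of a special loop in a gentle pair (according to how many of $a\epsilon$, $\epsilon b$, $ab$ are relations), tracking each configuration through the idempotent refinement and the induction–restriction step.
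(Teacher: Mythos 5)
The paper itself does not prove this proposition: it is imported verbatim from \cite{GeissdelaPena} and \cite{AmiotBrustle}, so there is no internal proof to compare against. Your outline follows the same route as those references: define the involution, re-present $A$ so that the involution becomes an automorphism of a bound quiver, and compute the skew-group algebra orbit by orbit in the style of \cite{ReitenRiedtmann}. Your first step is correct and complete: the gentle conditions applied to a special loop $\epsilon$ inside the gentle pair $(Q,I\amalg\{\epsilon^2\})$ do force every relation of $I$ to avoid $\epsilon$, so $\sigma$ preserves the defining ideal (note only that $\sigma(\epsilon)$ should really be $e_i-\epsilon$ rather than $1-\epsilon$ if $\sigma$ is to respect $\epsilon=e_i\epsilon e_i$; this is the same abuse of notation as in the statement). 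Your orbit analysis of vertices and arrows also produces the correct quiver $\widetilde{Q}$: on the paper's running example it reproduces exactly the seven-vertex quiver of Subsection \ref{section5.1}.

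There are, however, two genuine problems. First, a misstatement: refining $e_i$ into $\epsilon\oplus(e_i-\epsilon)$ shows that the presentation $(Q,I,{\rm Sp})$ does not exhibit the primitive idempotents, not that $A$ fails to be basic. Skew-gentle algebras are basic — the paper's example displays the Gabriel quiver $\bar{Q}$ with admissible ideal $\bar{I}$ — and nothing in your argument uses or needs non-basicness. Second, and more seriously, the proposal stops exactly where the content of the proposition lies. After refining idempotents, the relations of $I$ through a special vertex become \emph{binomial} (in the example, $ba$ becomes $ba+b'a'$), so at that stage one does not have a monomial algebra at all; the assertion to be proved is that in $A\bZ_2$ these binomial relations diagonalize into pairs of \emph{monomial} relations (in the example, $b^+a^+=0=b^-a^-$ while $b^{\pm}a^{\mp}\neq 0$), after which the valence bounds and the relation-pairing axioms of a gentle pair must be verified at the fused vertices. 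You correctly identify this as ``the main obstacle'' and sketch a case analysis, but you do not carry it out, nor do you verify primitivity of the idempotents $\epsilon$, $e_i-\epsilon$ and $e_j\otimes f_{\pm}$ on which your Morita equivalence rests. As written, the proposal shows that $A\bZ_2$ is Morita equivalent to \emph{some} bound quiver algebra, but not that this algebra is gentle, which is the actual claim.
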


Moreover, if the skew-gentle algebra $A$ is given by a $\cross$-dissected surface $(\surf,M_\rpoint,P_\rpoint,X_\cross,D)$, one can understand geometrically the dissected surface $(\widetilde{\surf},\widetilde{M}_\rpoint,\widetilde{P}_\rpoint,\widetilde{D})$ associated with the gentle algebra $\widetilde{A}$. The construction is given in \cite{AmiotBrustle}.
We recall it here for the convenience of the reader : we fix a point on each boundary segment.  This set of points is denoted by $M_{\gpoint}$ and will be refered as ``green'' points in the sequel.

Then for each $X_i\in X_\cross$, we draw a curve $\gamma_i$ from $X_i$ to the green point which is on the boundary of the polygon containing $X_i$ in such a way that the $\gamma_i$'s do not intersect and stay in the interior of the polygon. We cut the surface $\surf$ along the curves $\gamma_i$ to obtain a surface $\surf^+$. We take another copy $\surf^-$ of this new surface $\surf^+$, and we glue $\surf^+$ and $\surf^-$ along the green segment (see picture below). 
 \[\scalebox{0.6}{
  \begin{tikzpicture}[scale=1.3,>=stealth]
  
  \node at (2,5) {$\surf$};
  \draw[thick, red, fill=red!10] (0,0)--(0,3)--(2,4)--(4,3)--(4,0)--(0,0);
\node at (0,0) {$\rpoint$};
\node at (0,3) {$\rpoint$};
\node at (2,4) {$\rpoint$};
\node at (4,3) {$\rpoint$};
\node at (4,0) {$\rpoint$};
\node at (2,0) {$\gpoint$};
\node at (1,2) {$\cross$};
\node at (2,2) {$\cross$};
\node at (3,2) {$\cross$};
\node at (2.2,2) {$X_2$};
\node at (1.2,2) {$X_1$};
\node at (3.2,2) {$X_3$};

\draw[thick, red] (0,3)--(1,2);
\draw[thick, red] (2,2)--(2,4)--(3,2);

\draw[thick] (0,0)--(4,0);
\draw[thick, dark-green] (2,0)--node [black, fill=red!10, inner sep=0pt]{$\gamma_1$}(1,2);
\draw[thick, dark-green] (2,0)--node [black, fill=red!10, inner sep=0pt]{$\gamma_2$}(2,2);
\draw[thick, dark-green] (2,0)--node [black, fill=red!10, inner sep=0pt]{$\gamma_3$}(3,2);

\begin{scope}[xshift=5cm]

\node at (2,5) {$\surf^+$};

 \draw[white, fill=red!10] (0,0)--(0,3)--(2,4)--(4,3)--(4,0)--(3,0)--(3,2)--(2.5,0)--(2,2)--(1.5,0)--(1,2)--(1,0)--(0,0);
 
\node at (0,0) {$\rpoint$};
\node at (0,3) {$\rpoint$};
\node at (2,4) {$\rpoint$};
\node at (4,3) {$\rpoint$};
\node at (4,0) {$\rpoint$};
\node at (1,0) {$\gpoint$};
\node at (1.5,0) {$\gpoint$};
\node at (2.5,0) {$\gpoint$};
\node at (3,0) {$\gpoint$};

\node at (2.2,2) {$X_2$};
\node at (1.2,2) {$X_1$};
\node at (3.2,2) {$X_3$};

\node at (1,2) {$\cross$};
\node at (2,2) {$\cross$};
\node at (3,2) {$\cross$};

\draw[thick, red] (0,0)--(0,3)--(2,4)--(4,3)--(4,0);

\draw[thick, red] (0,3)--(1,2);
\draw[thick, red] (2,2)--(2,4)--(3,2);

\draw[thick] (0,0)--(1,0);
\draw[thick] (3,0)--(4,0);

\draw[thick, dark-green] (1,0)--(1,2)--(1.5,0)--(2,2)--(2.5,0)--(3,2)--(3,0);

\node at (0.8,0.2) {$P_1^+$};
\node at (1.5,-0.5) {$P_2^+=P_1^-$};
\node at (2.5,-0.2) {$P_3^+=P_2^-$};
\node at (3.2,0.2) {$P_3^-$};

\end{scope}

\begin{scope}[xshift=12cm, yshift=3cm,scale=0.6]

\node at (2,4) {$\widetilde{\surf}$};
\draw[white, fill=red!10] (0,-1)--(0,2)--(2,3)--(4,2)--(4,-1)--(3,-3)--(1,-3)--(0,-1);

\node at (0,0) {$\rpoint$};
\node at (0,2) {$\rpoint$};
\node at (2,3) {$\rpoint$};
\node at (4,2) {$\rpoint$};
\node at (4,0) {$\rpoint$};

\node at (0,-1) {$\gpoint$};
\node at (1,-3) {$\gpoint$};
\node at (3,-3) {$\gpoint$};
\node at (4,-1) {$\gpoint$};

\draw[thick, red] (0,0)--(0,2)--(2,3)--(4,2)--(4,0);
\draw[thick,dark-green](0,-1)--(1,-3)--(3,-3)--(4,-1);
\draw[thick] (0,0)--(0,-1);
\draw[thick] (4,0)--(4,-1);

\node at (0.5,-2) {$\cross$};
\node at (2,-3) {$\cross$};
\node at (3.5,-2) {$\cross$};
\node at (1,-2) {$X_1$};
\node at (2,-3.5) {$X_2$};
\node at (4,-2) {$X_3$};

\draw[red, thick] (0.5,-2)..controls (1.5,-1.5) and (0.5,1.5)..(0,2);
\draw[red,thick] (2,-3)--(2,3);
\draw[red, thick] (3.5,-2)--(2,3);

\begin{scope}[ xshift=1cm, yshift=-4cm, rotate=180]
\draw[white, fill=red!10] (0,-1)--(0,2)--(2,3)--(4,2)--(4,-1)--(3,-3)--(1,-3)--(0,-1);

\node at (0,0) {$\rpoint$};
\node at (0,2) {$\rpoint$};
\node at (2,3) {$\rpoint$};
\node at (4,2) {$\rpoint$};
\node at (4,0) {$\rpoint$};

\node at (0,-1) {$\gpoint$};
\node at (1,-3) {$\gpoint$};
\node at (3,-3) {$\gpoint$};
\node at (4,-1) {$\gpoint$};

\draw[thick, red] (0,0)--(0,2)--(2,3)--(4,2)--(4,0);
\draw[thick,dark-green](0,-1)--(1,-3)--(3,-3)--(4,-1);
\draw[thick] (0,0)--(0,-1);
\draw[thick] (4,0)--(4,-1);

\node at (0.5,-2) {$\cross$};
\node at (2,-3) {$\cross$};
\node at (3.5,-2) {$\cross$};
\node at (2,-3.5) {$X_2$};
\node at (4,-2) {$X_3$};

\draw[red, thick] (0.5,-2)..controls (1.5,-1.5) and (0.5,1.5)..(0,2);
\draw[red,thick] (2,-3)--(2,3);
\draw[red, thick] (3.5,-2)--(2,3);

\end{scope}
 \end{scope}   
  
  \end{tikzpicture}}\]

We obtain this way an oriented smooth surface $\widetilde{\surf}$ with boundary, which comes naturally with a diffeomorphism $\sigma$ of order $2$ exchanging the copies $\surf^+$ and $\surf^-$ (see Subsection \ref{section5.1} for an example). The following is shown in \cite[Theorem 4.6]{AmiotBrustle}

\begin{theorem}
Let $(\surf,M_\rpoint,P_\rpoint,X_\cross,D)$ be a $\cross$-dissected surface, and $A:=A(D)$ the associated skew-gentle algebra. Then there exists a surface $(\widetilde{\surf},\widetilde{M}_\rpoint,\widetilde{P}_\rpoint)$ together with an orientation preserving diffeomorphism $\sigma$ of order $2$, fixing globally $\widetilde{M}_\rpoint$ and $\widetilde{P}_{\rpoint}$ such that 

\begin{enumerate}
\item there exists a $2$-folded cover $p:\widetilde{\surf}\to \surf$ branched in the points in $X_\cross$ that induces a diffeomorphism $(\widetilde{S}\setminus \widetilde{X})/\sigma\to \surf\setminus X$ where $\widetilde{X}=p^{-1}(X)$ are the points fixed by $\sigma$;

\item $\widetilde{D}:=p^{-1}(D)$ is a dissection $\sigma$-invariant of  $(\widetilde{\surf},\widetilde{M}_\rpoint,\widetilde{P}_\rpoint)$;

\item the gentle algebra $\widetilde{A}:=A(\widetilde{D})$ is the one in Proposition \ref{prop::skewgentle=skewgroup}, so is Morita equivalent to the skewgroup algebra $A\bZ_2$;

\item the action of $\sigma$ on $\widetilde{D}$ induces an action of $\bZ_2$ on the gentle algebra $\widetilde{A}$ and the algebras $\widetilde{A}\bZ_2$ and $A$ are Morita equivalent.

\end{enumerate}

\end{theorem}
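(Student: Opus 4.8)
The statement collects the content of \cite[Theorem 4.6]{AmiotBrustle}, so I only sketch the strategy, following the cut-and-glue construction recalled above. First I would make the construction of $\widetilde{\surf}$ precise and extract the branched cover from it. Cutting $\surf$ along the green curves $\gamma_i$ opens each $\gamma_i$ into two boundary edges of $\surf^+$; doubling and gluing the two copies $\surf^+$ and $\surf^-$ crosswise along these edges produces, near each endpoint $X_i$, the local model $z\mapsto z^2$. Thus the natural map $p\colon\widetilde{\surf}\to\surf$ that is the cutting identification on each sheet is an unramified double cover over $\surf\setminus X_\cross$ and is branched exactly at the points $X_i$, whose single preimages $\widetilde{X}:=p^{-1}(X_\cross)$ are precisely the fixed points of the sheet-exchange involution $\sigma$. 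Since $p$ is a local diffeomorphism away from $\widetilde{X}$, pulling back the orientation of $\surf$ makes $p$ orientation preserving there and extends across $\widetilde{X}$ (the branching being $z\mapsto z^2$); the relation $p\circ\sigma=p$ then forces the deck transformation $\sigma$ to be orientation preserving of order $2$. Setting $\widetilde{M}_\rpoint:=p^{-1}(M_\rpoint)$ and $\widetilde{P}_\rpoint:=p^{-1}(P_\rpoint)$, which are disjoint from the branch locus, $\sigma$ permutes them freely and fixes each set globally. This gives (1).

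Next I would prove (2). The equality $p\circ\sigma=p$ gives $\sigma(\widetilde{D})=\widetilde{D}$ at once. For an arc $\delta\in D$ disjoint from the $X_i$, the preimage $p^{-1}(\delta)$ is a pair of disjoint arcs exchanged by $\sigma$; for the unique arc of $D$ ending at a point $X_i$, its two lifts meet at the branch point $\widetilde{X_i}$ and join into a single $\sigma$-invariant arc through $\widetilde{X_i}$. The remaining point is the dissection axiom: since $p$ restricts to a homeomorphism on the interior of each polygon cut out by $D$ and the green vertices sit on the boundary segments, each complementary region of $\widetilde{D}$ in $\widetilde{\surf}$ is again a polygon having exactly one boundary segment as a side. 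Here I would check carefully that the doubling introduces no contractible lift and no region violating the axioms, so that $\widetilde{D}$ is a genuine dissection.

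Finally, for (3) I would read off the quiver with relations of $A(\widetilde{D})$ from the combinatorial dictionary for dissections and match it arrow by arrow with the gentle pair $(\widetilde{Q},\widetilde{I})$ produced by Proposition \ref{prop::skewgentle=skewgroup}: a vertex of $\widetilde{Q}$ is an arc of $\widetilde{D}$, either a $\sigma$-orbit of size two coming from an arc of $D$ through an $X$-point or a $\sigma$-invariant arc, and the angles around the doubled green points reproduce exactly the arrows and the length-two relations of $\widetilde{A}$. For (4), the geometric involution $\sigma$ permutes the arcs of $\widetilde{D}$, hence acts on $\widetilde{Q}$ and induces a $\bZ_2$-action on $\widetilde{A}$; I would then apply the reciprocity of \cite{ReitenRiedtmann} for skew-group algebras (valid since $2$ is invertible in $k$), namely that $(A\bZ_2)\bZ_2$ is Morita equivalent to $A$, together with (3), to conclude $\widetilde{A}\bZ_2\sim(A\bZ_2)\bZ_2\sim A$. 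I expect the main obstacle to be this last step: one must verify that the geometric action of $\sigma$ on $\widetilde{A}$ corresponds, under the Morita equivalence $\widetilde{A}\sim A\bZ_2$ of Proposition \ref{prop::skewgentle=skewgroup}, to the canonical dual $\bZ_2$-action on $A\bZ_2$, so that reciprocity applies. Concretely this means matching the sheet-swap of $\sigma$ at the split arcs with the grading automorphism sending each special loop $e$ to $1-e$, which is exactly where the topology of the cover and the combinatorics of the $\cross$-dissection must be reconciled.
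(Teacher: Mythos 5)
Your proposal takes essentially the same route as the paper's source for this statement: the paper itself does not reprove the theorem, it only recalls the cut-and-glue construction and cites \cite[Theorem 4.6]{AmiotBrustle}, and your sketch (branched double cover with local model $z\mapsto z^2$, lifting of the dissection, identification of $A(\widetilde{D})$ with $\widetilde{A}$, then Reiten--Riedtmann reciprocity for the dual $\bZ_2$-action) is precisely the strategy of that reference, including your correct identification of the real crux in item (4), namely that the geometric involution must be matched with the dual action on $A\bZ_2$ before reciprocity applies.

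Two local repairs are needed, though neither affects the strategy. First, in your step (3) the dictionary is stated backwards: an arc of $D$ ending at a point of $X_\cross$ lifts to a \emph{single} $\sigma$-invariant arc and corresponds to a special vertex of $Q$, contributing \emph{one} vertex of $\widetilde{Q}$ (the idempotent $e_i\otimes 1$ splits into two conjugate idempotents there), whereas an arc disjoint from $X_\cross$ lifts to a $\sigma$-orbit of two arcs, contributing two vertices $i^{\pm}$; this is exactly what you correctly asserted in step (2), so only the sentence, not the idea, needs fixing. Second, ``$p$ restricts to a homeomorphism on the interior of each polygon'' is imprecise: $p$ is two-to-one over each polygon, and the correct claim is that each of the \emph{two} components of the preimage of a polygon's interior maps homeomorphically onto it (the branch points are vertices of polygons, never interior points, so the restricted cover is trivial); one then still checks, as you note, that the boundary word of each lifted polygon traverses the sides of the original polygon exactly once, so that each lifted polygon has exactly one boundary segment even when its closure meets a branch point.
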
 

As a consequence of item $(4)$ together with \cite{ReitenRiedtmann} we obtain the following

\begin{corollary}\label{cor::objectsAobjectsAtilde}
The $\bZ_2$-action on $\widetilde{A}:=A(\widetilde{D})$ induces a $\bZ_2$-action on the indecomposable objects of $\cD^{\rm b}(\widetilde{A})$, and we have a bijection between the isomorphism classes of indecomposable objects in $\cD^{\rm b}(A)$ with the union of the following two sets
\begin{itemize}
\item $\{\sigma\textrm{-invariant indec. in }\cD^{\rm b}(\widetilde{A})\}/\textrm{isom.}\times \bZ_2 $
\item $\{\sigma\textrm{-orbits of non }\sigma\textrm{-invariant indec. in }\cD^{\rm b}(\widetilde{A})\}/\textrm{isom.}.$
\end{itemize}
\end{corollary}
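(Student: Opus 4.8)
The plan is to deduce Corollary~\ref{cor::objectsAobjectsAtilde} from item $(4)$ of the preceding theorem, which identifies $A$ (up to Morita equivalence) with the skew-group algebra $\widetilde{A}\bZ_2$, and then to apply the representation-theoretic results of Reiten and Riedtmann \cite{ReitenRiedtmann} on skew-group algebras. Since Morita equivalence induces a triangle equivalence on bounded derived categories, the indecomposable objects of $\cD^{\rm b}(A)$ are in bijection with those of $\cD^{\rm b}(\widetilde{A}\bZ_2)$, so the whole problem reduces to understanding indecomposables of $\cD^{\rm b}(\widetilde{A}\bZ_2)$ in terms of those of $\cD^{\rm b}(\widetilde{A})$ together with the $\bZ_2$-action.

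The key technical input is the interplay between the \emph{induction} functor $- \otimes_{\widetilde{A}} \widetilde{A}\bZ_2 \colon \cD^{\rm b}(\widetilde{A}) \to \cD^{\rm b}(\widetilde{A}\bZ_2)$ and the \emph{restriction} functor in the other direction, both of which are exact and hence descend to the derived level. First I would record that the $\bZ_2$-action on $\widetilde{A}$ induces an auto-equivalence of $\cD^{\rm b}(\widetilde{A})$, and hence a $\bZ_2$-action on the set of isomorphism classes of indecomposables; write $\sigma_*$ for this action. The Reiten--Riedtmann dichotomy then asserts that, since $2$ is invertible in $k$, every indecomposable of $\cD^{\rm b}(\widetilde{A}\bZ_2)$ appears as a direct summand of the induction of an indecomposable of $\cD^{\rm b}(\widetilde{A})$, and the summand structure is governed entirely by whether the inducing object is $\sigma$-invariant:
\begin{itemize}
\item if $M$ is \emph{not} $\sigma$-invariant, then the induced object is indecomposable and depends only on the $\sigma$-orbit $\{M, \sigma_* M\}$, yielding the second set;
\item if $M$ \emph{is} $\sigma$-invariant, then the induced object splits as a direct sum of exactly two non-isomorphic indecomposables, which are distinguished by the two characters of $\bZ_2$ (equivalently, by a sign in $\{\pm 1\}$), yielding the first set.
\end{itemize}
Finally one checks that these constructions are mutually inverse: restricting back recovers the orbit one started from, and the two summands in the invariant case are exchanged by twisting with the nontrivial character, so no further identifications occur. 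Assembling these two cases gives precisely the claimed bijection onto the disjoint union of the two displayed sets.

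The main obstacle I anticipate is verifying the splitting behaviour of the induction functor \emph{at the level of triangulated categories} rather than for module categories, where Reiten--Riedtmann is classically stated. The statement carries over because $\widetilde{A}$ is finite-dimensional and $\bZ_2$ has order invertible in $k$, so the group action is compatible with the triangulated structure and the functors $\Ind$ and restriction form an adjoint pair whose unit and counit are split by the averaging idempotent $\tfrac{1}{2}(1+\sigma)$; one must confirm that this idempotent-splitting argument, which is what produces the two summands in the invariant case and the indecomposability in the non-invariant case, is valid in $\cD^{\rm b}(\widetilde{A})$ because it is a Krull--Schmidt category. Once idempotent-completeness and Krull--Schmidt are in place, the remaining bookkeeping — matching $\sigma$-orbits on one side with the two-element index set $\bZ_2$ on the other — is routine, and the corollary follows.
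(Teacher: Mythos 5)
Your proposal is correct and follows essentially the same route as the paper: the paper derives the corollary directly from item $(4)$ of the preceding theorem (Morita equivalence of $A$ with $\widetilde{A}\bZ_2$) combined with the Reiten--Riedtmann induction/restriction dichotomy for skew-group algebras, which is exactly your argument. The paper gives no further details, so your discussion of the Krull--Schmidt property and the averaging idempotent $\tfrac{1}{2}(1+\sigma)$ needed to transport the dichotomy to the derived category is a welcome elaboration of what the citation implicitly assumes.
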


\section{Description of indecomposable objects}

In this section, and in the rest of the paper $k$ is an algebraically closed field of characteristic $\neq 2$. Let $(\surf,M_\rpoint,P_\rpoint,X_\cross,D)$ be a $\cross$-dissected surface and  $A:=A(D)$ be the associated skew-gentle algebra. We fix a set $M_{\gpoint}$ of  green points lying in each boundary segment of the marked surface $(\surf,M_\rpoint)$ as in the previous section. We denote by $(\widetilde{\surf},\widetilde{M}_\rpoint,\widetilde{P}_\rpoint, \widetilde{D},\sigma)$ the associated $\bZ_2$-surface, by $\widetilde{A}:=A(\widetilde{D})$ the associated gentle algebra, and by $\widetilde{M}_{\gpoint}$ the preimage of $M_{\gpoint}$ under the projection $\widetilde{\surf}\to \surf$.

\subsection{Indecomposables in $\cD^{\rm b}(A)$}

We start by recalling results in \cite{OpperPlamondonSchroll} where they describe indecomposable objects of $\cD^{\rm b}(\widetilde{A})$ in terms of curves on $\widetilde{\surf}$.

\medskip

\begin{definition}\label{def::pi_1}
A \emph{curve} on $\widetilde{\surf}\setminus \widetilde{P}_{\rpoint}$ with endpoints in $\widetilde{M}_{\gpoint}$ and $\widetilde{P}_{\rpoint}$ consists of the following three classes of regular curves:
 
 \begin{itemize}
 \item either a regular curve $\gamma:[0,1]\to \widetilde{\surf}\setminus \widetilde{P}_\rpoint$ with endpoints $\gamma(0)$ and $\gamma(1)$ in $\widetilde{M}_{\gpoint}$,
 \item or an infinite regular curve $\gamma:[0,1)\to \widetilde{\surf}\setminus \widetilde{P}_\rpoint$ (resp. $\gamma:(0,1]\to \widetilde{\surf}\setminus \widetilde{P}_\rpoint$) with $\gamma(0)$ (resp. $\gamma(1)$ ) in $\widetilde{M}_{\gpoint}$ and whose infinite ray circles around  a point in $\widetilde{P}_\rpoint$ in counter clockwise orientation; 
 
 \item or an infinite regular curve  $\gamma:(0,1)\to \widetilde{\surf}\setminus \widetilde{P}_\rpoint$ whose infinite rays circles around  points in $\widetilde{P}_\rpoint$ in counter clockwise orientation.
 \end{itemize} 
 
 We consider these curves up to homotopy.
For finite curves,  the homotopy relation is regular homotopy preserving endpoints. For infinite curves, we consider the following homotopy equivalence relation : two curves $\gamma$ and $\gamma'$ with $\gamma(0)=\gamma'(0)$ in $\widetilde{M}_{\gpoint}$ and whose infinite ray circles around the same point $p$ in $\widetilde{P}_\rpoint$ are \emph{homotopic} if there exists a basis $\mathcal{B}$ of neighbourhoods of $p$ such that for any $V\in \mathcal{B}$, $\gamma$ and $\gamma'$ restricted to $\widetilde{\surf}\setminus V$ are connected, and their restrictions $\gamma_V,\gamma_V':[0,1]\to \widetilde{\surf}\setminus V$ are regular homotopic with respect  to their endpoints $\gamma_V(0)=\gamma'_V(0)$ and $\gamma_V(1),\gamma_V'(1)\in \partial V$. Similar homotopy is defined for infinite curves wrapping around points in $\widetilde{P}_{\rpoint}$ on both ends.
 
 We denote by $\pi_1(\widetilde{\surf},\widetilde{M}_{\gpoint}, \widetilde{P}_{\rpoint})$ the set of curves up to homotopy.
 
 \end{definition}

 \begin{remark} 
Note that as stated in Remark 1.20 of \cite{OpperPlamondonSchroll}, infinite curves as above can be considered as finite curves with endpoints in $\widetilde{P}_\rpoint$. But it is however not clear to define a groupoid structure on $\pi_1(\widetilde{\surf},\widetilde{M}_{\gpoint}, \widetilde{P}_{\rpoint})$ if the set $\widetilde{P}_{\rpoint}$ is non empty. 
\end{remark}

We then associate to the dissection $\widetilde{D}$ a \emph{line field} $\widetilde{\eta}_{\widetilde{D}}$ on $\widetilde{\surf}\setminus (\partial \widetilde{S}\cup \widetilde{P}_\rpoint)$, that is, a section of the projectivized tangent bundle $\mathbb P(T\widetilde{\surf})\to \widetilde{\surf} $. The line field is tangent along each arc of $\widetilde{D}$ and is defined up to homotopy in each polygon cut out by $\widetilde{D}$ by the following foliation:

\[\scalebox{1}{
\begin{tikzpicture}[>=stealth,scale=0.6]
\draw[thick, red] (0,0)--(-1,2)--(2,3)--(5,2)--(4,0);
\draw[thick](4,0)--(0,0);
\draw[red,thick,fill=red] (0,0) circle (0.15);
\draw[red,thick,fill=red] (-1,2) circle (0.15);
\draw[red,thick,fill=red] (2,3) circle (0.15);
\draw[red,thick,fill=red] (5,2) circle (0.15);
\draw[red,thick,fill=red] (4,0) circle (0.15);
\draw[dark-green,thick,fill=white] (2,0) circle (0.15);

\draw[thick,red](0.5,0)..controls (0,1) and (-0.5,1.5)..(0,2).. controls (0.5,2.2) and (1,2.5)..(2,2.5)..controls (3,2.5) and   (3.5,2.2)..(4,2)..controls (4.5,1.5) and  (4,1)..(3.5,0);

\draw[thick,red] (1,0)..controls (0.5,0.8) and (1,2)..(2,2).. controls (3,2) and (3.5,0.8)..(3,0);

\draw[thick, red] (1.5,0)..controls (1.5,1) and (2.5,1)..(2.5,0);

\end{tikzpicture}}\]
Note that since $\widetilde{D}$ is $\sigma$-invariant, we may assume that the line field $\widetilde{\eta}$ is $\sigma$-invariant.

Given a non contractible regular closed curve $\gamma$ on $\widetilde{S}\setminus \widetilde{P}_{\rpoint}$ that does not contain any contractible loop, it is possible to define its \emph{winding number} $w_{\widetilde{\eta}}(\gamma)$ with respect to $\widetilde{\eta}$, that is the (algebraic) intersection number between $\dot{\gamma}:\mathbb S^1\to \mathbb P(T\widetilde{\surf})$ and $\widetilde{\eta}\circ\gamma :\mathbb S^1\to \mathbb P(T\widetilde{\surf})$. This winding number is well defined on regular homotopy classes of curves that do not contain a contractible loop. Similarly, if $\gamma:[0,1]\to \widetilde{\surf}\setminus \widetilde{P}_{\rpoint}$ is a regular curve that does not contain any contractible loop such that $\dot{\gamma}(0)$ and $\widetilde{\eta}(\gamma(0))$ are transverse, and $\dot{\gamma}(1)$ and $\widetilde{\eta}(\gamma(1))$ are transverse too, we can define the winding number $w_{\widetilde{\eta}}(\gamma)$ of $\gamma$ with respect to $\widetilde{\eta}$. We refer to \cite{AmiotPlamondonSchroll} or \cite{Opper} for more precise definitions. 

\medskip

We can then define the set $\pi_1^{\rm gr}(\widetilde{\surf},\widetilde{M}_{\gpoint}, \widetilde{P}_{\rpoint})$ of homotopy classes of graded curves. 

\begin{definition}
Let $\gamma$ be a non contractible curve on $\widetilde{\surf}\setminus \widetilde{P}_{\rpoint}$ as in definition \ref{def::pi_1}, or a non contractible regular closed curve $\mathbb S^1\to \widetilde{\surf}\setminus \widetilde{P}_{\rpoint}$. Assume  moreover that $\gamma$ does not contain any contractible loops and that $\gamma$  intersects transversally and minimally the dissection $\widetilde{D}$. A \emph{grading} on $\gamma$ is a map $\mathbf{n}:\gamma(0,1)\cap \widetilde{D}\to \mathbb Z$ satisfying:
$$\mathbf{n}(\gamma(t_{i+1}))=\mathbf{n}(\gamma(t_i))+w_{\eta}(\gamma_{|_{[t_i,t_{i+1}]}}),$$
if $\gamma(t_i)$ and $\gamma(t_{i+1})$ are two consecutive intersections of $\gamma$ with $\widetilde{D}$. More concretely, on $[t_i,t_{i+1}]$, the curve $\gamma$ intersects one polygon cut out by $\widetilde{D}$, and we have 
$$\mathbf{n}(\gamma(t_{i+1}))=\mathbf{n}(\gamma(t_i))+ 1$$ if the boundary segment of  the polygon is on the left of the curve $\gamma_{|_{[t_i,t_{i+1}]}}$, and 
$$\mathbf{n}(\gamma(t_{i+1}))=\mathbf{n}(\gamma(t_i))- 1$$
if the boundary segment lies on the right.

If $(\gamma,\grading)$ and $(\gamma',\grading')$ are two graded curves such that $\gamma$ is homotopic (in the sense of Definition \ref{def::pi_1}) to $\gamma'$ in $\widetilde{\surf}\setminus\widetilde{P}_{\rpoint}$, and such that $\grading(\gamma (t_1))=\grading'(\gamma' (t'_1))$, then their grading coincide, in the sense that  for any $i$ we have 

$$ \gamma(t_i),\gamma(t'_i)\textrm{ lie on the same arc of }\widetilde{D} \textrm{ and } \grading(\gamma(t_i))=\grading'(\gamma'(t'_i)).$$

We denote by $\pi_1^{\rm gr}(\widetilde{\surf},\widetilde{M}_{\gpoint}, \widetilde{P}_{\rpoint})$ the set of graded curves up to  homotopy as defined in Definition \ref{def::pi_1}.

Let $\gamma:\mathbb S^1\to \widetilde{\surf}$ be a non contractible closed curve on $\widetilde{\surf}$ that intersects transversally and minimally the dissection $\widetilde{D}$. One easily sees that it admits a grading if and only if its winding number with respect to the line field $\widetilde{\eta}$ is $0$. We denote by $[\gamma]$ its conjugacy class in the fundamental group of $\widetilde{S}$, that is the free homotopy class of $\gamma$.  We denote by $\pi_1^{\rm gr,free}(\widetilde{\surf})$ the set of non contractible graded closed curves, up to free homotopy.

\end{definition}

Since we have 
\begin{eqnarray*}\grading (\gamma ^{-1}(1-t_i)) & = & \grading (\gamma(t_i))\\
 &= & \grading(\gamma(t_{i+1}))-w_{\eta}(\gamma_{|_{[t_i,t_{i+1}]}})\\
  & = & \grading (\gamma^{-1}(1-t_{i+1))})+w_{\eta}(\gamma^{-1}_{|_{[1-t_{i+1},1-t_{i}]}})
  \end{eqnarray*}
 the equivalence relation $\gamma\sim \gamma^{-1}$ (resp. $[\gamma]\sim [\gamma^{-1}]$) on $\pi_1(\widetilde{\surf},\widetilde{M}_{\gpoint}, \widetilde{P}_{\rpoint})$ (resp. on $\pi_1^{\rm free}(\widetilde{S})$) induces an equivalence relation 
 $(\gamma,\grading)\sim (\gamma^{-1},\grading)$ (resp. $([\gamma],\grading)\sim ([\gamma^{-1}],\grading)$)
 on $\pi_1^{\rm gr}(\surf,\widetilde{M}_{\gpoint},\widetilde{P}_{\rpoint})$ (resp. on $\pi_1^{\rm gr,free}(\widetilde{S})$).

One of the main result in \cite{OpperPlamondonSchroll} is the following

\begin{theorem}\label{thm::OPS}\cite{OpperPlamondonSchroll}
Let $k$ be an algebraically closed field. Let $\widetilde{A}$ be a gentle algebra and $(\widetilde{\surf}, \widetilde{M}_\rpoint,\widetilde{P}_\rpoint,\widetilde{D})$ the associated dissected surface. Then there is a bijection between indecomposable objects of $\cD^{\rm b}(\widetilde{A})$ and the following sets
\begin{enumerate}
\item $\pi_1^{\rm gr}(\widetilde{\surf},\widetilde{M}_{\gpoint}, \widetilde{P}_{\rpoint})/\sim$ where $(\gamma,\grading)\sim (\gamma^{-1},\grading)$;
\item $\pi_1^{\rm gr,free}(\widetilde{\surf})\times k^*/\sim $, where  $([\gamma],\mathbf{n},\lambda)\sim ([\gamma^{-1}],\mathbf{n},\lambda^{-1})$.
\end{enumerate}
\end{theorem}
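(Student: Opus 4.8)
The plan is to reduce the statement to the combinatorial classification of indecomposables in the derived category of a gentle algebra and then translate that classification into the language of graded curves. By Bekkert--Merklen \cite{BekkertMerklen} (see also \cite{BurbanDrozd}), every indecomposable object of $\cD^{\rm b}(\widetilde{A})$ is isomorphic either to a \emph{string complex} $P_w$, attached to a reduced walk $w$ in $\widetilde{Q}$ (a sequence of arrows and formal inverses avoiding the relations of $\widetilde{I}$ and their inverses and containing no cancellation), or to a \emph{band complex} $B_{b,\lambda,m}$, attached to a primitive cyclic reduced walk $b$, a scalar $\lambda\in k^*$, and an integer $m\geq 1$; algebraic closedness of $k$ enters exactly here, guaranteeing that the indecomposable modules over the monodromy of a band are the single Jordan blocks $k[t]/(t-\lambda)^m$. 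Moreover $P_w\cong P_{w^{-1}}$ and $B_{b,\lambda,m}\cong B_{b^{-1},\lambda^{-1},m}$, and these are the only coincidences. The first step is to fix this list together with its isomorphism relations; everything afterwards is a term-by-term matching with the two sets in the statement.

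The core of the argument is the dictionary between walks and curves. Each arc of $\widetilde{D}$ is the indecomposable projective at the corresponding vertex of $\widetilde{Q}$, each arrow of $\widetilde{Q}$ is an \emph{angle} between two arcs consecutive in the clockwise order around a common endpoint, and the relations of $\widetilde{I}$ record which consecutive angles may \emph{not} be traversed. A reduced walk $w$ then reads off, letter by letter, a sequence of arcs crossed by a curve, each pair of consecutive crossings occurring inside a single polygon cut out by $\widetilde{D}$; conversely a curve in $\pi_1(\widetilde{\surf},\widetilde{M}_{\gpoint},\widetilde{P}_{\rpoint})$ meeting $\widetilde{D}$ transversally and minimally produces such a walk. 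I would check that reducedness of $w$ corresponds to minimality of the intersection, that finite curves with endpoints in $\widetilde{M}_{\gpoint}$, infinite curves wrapping at one or both ends, and closed curves account respectively for finite strings, one- or two-sided infinite strings, and bands, and that the puncture-wrapping homotopy of Definition \ref{def::pi_1} is precisely the move identifying curves whose walks differ by an eventually periodic tail winding around a point of $\widetilde{P}_{\rpoint}$. The relation $w\sim w^{-1}$ becomes $\gamma\sim\gamma^{-1}$; for bands the inversion of the monodromy eigenvalue under reversal yields the factor $k^*$ and the relation $([\gamma],\grading,\lambda)\sim([\gamma^{-1}],\grading,\lambda^{-1})$, while the Jordan size $m$ is absorbed by allowing non-primitive closed curves $\gamma=\delta^m$.

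It remains to match the homological grading with the winding number, which is the heart of the proof. In a string or band complex the cohomological position of each indecomposable summand is determined by the pattern of direct and inverse letters, and passing from one crossing to the next changes this position by $\pm 1$ according to the local shape of the letter. I would show that this $\pm 1$ is exactly the contribution $w_{\widetilde{\eta}}(\gamma_{|[t_i,t_{i+1}]})$ recorded by the grading rule, namely $+1$ or $-1$ according as the boundary segment of the traversed polygon lies to the left or to the right of $\gamma$; this is possible precisely because $\widetilde{\eta}_{\widetilde{D}}$ was constructed tangent to the arcs and to the chosen foliation of each polygon. Consequently the gradings on a fixed $\gamma$ form a $\bZ$-torsor corresponding to the overall cohomological shift of the complex, the compatibility clause of the grading definition ensures that homotopic graded curves yield the same shifted complex, and a closed curve admits a grading if and only if $w_{\widetilde{\eta}}(\gamma)=0$ --- the condition singled out for $\pi_1^{\rm gr,free}(\widetilde{\surf})$.

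The main obstacle, I expect, is twofold, and both halves are topological rather than algebraic. First, one must prove well-definedness and injectivity on homotopy classes in the strong sense of Definition \ref{def::pi_1}: two curves give isomorphic complexes if and only if they are homotopic, the delicate point being the puncture-wrapping homotopy for infinite curves, which must be matched with the precise equivalence of ``homotopy strings'' producing the same bounded complex. Second, the winding-number computation has to be carried out uniformly across all local configurations of a curve relative to $\widetilde{D}$ and $\widetilde{\eta}$, including the degenerate cases where an endpoint sits at a green point or where the curve wraps around a point of $\widetilde{P}_{\rpoint}$; controlling the additivity of $w_{\widetilde{\eta}}$ under concatenation and its invariance under regular homotopy is what makes the grading a genuine invariant. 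Once these two points are settled, the two sets of the statement follow by bookkeeping and the bijection is complete.
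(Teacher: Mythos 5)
This statement is not proved in the paper at all: it is quoted from \cite{OpperPlamondonSchroll}, and the paper's introduction records that the proof there was obtained precisely by translating the combinatorial classification of \cite{BekkertMerklen} into topological terms. Your sketch follows that same route (homotopy strings and bands matched to curves, homological degree matched to winding number, infinite strings matched to puncture-wrapping curves, with the Jordan parameter absorbed into non-primitive closed curves), so it is consistent with the approach of the cited proof, and the paper itself contains nothing beyond the citation against which to compare further details.
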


Note that in this theorem the graded curves wrapping around points in $\widetilde{P}_\rpoint$ correspond to objects that are not in $\mathcal{K}^{\rm b}(\proj \widetilde{A})$.

\subsection{Indecomposable objects in terms of graded curves on $\widetilde{\surf}$}

In this section, the aim is to describe indecomposable objects of $\cD^{\rm b}(A)$ in terms of curves on $\widetilde{\surf}$. For that purpose, the first thing to understand is the action of $\sigma$ on the indecomposable objects of $\cD^{\rm b}(\widetilde{A})$ through the bijection of Theorem \ref{thm::OPS}, and then to apply Corollary \ref{cor::objectsAobjectsAtilde}.

We first fix a piece of notation. For $(\gamma,\mathbf{n})\in \pi_1^{\rm gr}(\surf,M_{\gpoint},\widetilde{P}_{\rpoint})$, we denote by $P_{(\gamma,\mathbf n)}$ the corresponding string object in $\cD^{\rm b}(\widetilde{A})$ of Theorem \ref{thm::OPS}. For $([\gamma],\mathbf n,\lambda)\in \pi_1^{\rm free,gr}(\surf)\times k^*$ we denote by $B_{([\gamma],\mathbf n,\lambda)}$ the corresponding band object in $\cD^{\rm b}(\widetilde{A})$.
 
\begin{lemma}
\begin{enumerate}
\item 
For $(\gamma,\mathbf{n})\in \pi_1^{\rm gr}(\widetilde{\surf},\widetilde{M}_{\gpoint}, \widetilde{P}_{\rpoint})$, we have $(P_{(\gamma,\mathbf n)})^\sigma\simeq P_{(\sigma\circ\gamma,\mathbf n\circ \sigma)}$.
\item For $([\gamma],\mathbf n,\lambda)\in \pi_1^{\rm free,gr}(\widetilde{\surf})\times k^*$, we have $(B_{([\gamma],\mathbf n,\lambda)})^\sigma\simeq B_{([\sigma\circ \gamma],\mathbf n\circ\sigma,\lambda)}$.
\end{enumerate}

\end{lemma}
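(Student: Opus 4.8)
The plan is to realise the autoequivalence $(-)^\sigma$ of $\cD^{\rm b}(\widetilde A)$ geometrically and to match it with the OPS correspondence of Theorem \ref{thm::OPS}, exploiting that the construction of string and band objects is natural with respect to orientation-preserving homeomorphisms preserving both the dissection and the line field. Since $\sigma$ fixes $\widetilde D$ globally, it permutes the arcs of $\widetilde D$; as the vertices of $\widetilde Q$ are in bijection with these arcs and the relations in $\widetilde I$ are read off from the clockwise order of arcs around the marked points (which $\sigma$, being orientation-preserving, respects), $\sigma$ induces exactly the algebra automorphism of $\widetilde A$ defining the $\bZ_2$-action of Proposition \ref{prop::skewgentle=skewgroup}. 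Restriction of scalars along this automorphism is the functor $(-)^\sigma$, and on the indecomposable projectives $e_i\widetilde A$ (the arc $i$) it acts by $e_i\widetilde A \mapsto e_{\sigma(i)}\widetilde A$.

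First I would treat the string case. Recall that $P_{(\gamma,\mathbf n)}$ is, up to isomorphism, the complex whose indecomposable summands in homological degree $\mathbf n(\gamma(t_i))$ are the projectives attached to the arcs carrying the consecutive intersection points $\gamma(t_i)\in \gamma(0,1)\cap\widetilde D$, with differential components determined by the polygon of $\widetilde D$ crossed on each segment $\gamma_{|[t_i,t_{i+1}]}$. Applying $(-)^\sigma$ relabels every summand by $\sigma$ and transports each differential component through $\sigma$. Because $\sigma$ is an orientation-preserving diffeomorphism fixing $\widetilde D$, it carries the intersection points $\gamma(t_i)$ to the intersection points $(\sigma\circ\gamma)(t_i)=\sigma(\gamma(t_i))$ of $\sigma\circ\gamma$ with $\widetilde D$, and maps polygons to polygons preserving which side is the boundary segment. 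Hence the relabelled complex is precisely the string object of $\sigma\circ\gamma$. It remains to identify the grading: since $\widetilde\eta$ may be chosen $\sigma$-invariant and $\sigma$ preserves orientation, $w_{\widetilde\eta}((\sigma\circ\gamma)_{|[t_i,t_{i+1}]})=w_{\widetilde\eta}(\gamma_{|[t_i,t_{i+1}]})$, so the defining recurrence for a grading is satisfied by $\mathbf n\circ\sigma$ on $\sigma\circ\gamma$ (using $\sigma^2=\mathrm{id}$, so that $(\mathbf n\circ\sigma)(\sigma(\gamma(t_i)))=\mathbf n(\gamma(t_i))$). This gives $(P_{(\gamma,\mathbf n)})^\sigma\simeq P_{(\sigma\circ\gamma,\,\mathbf n\circ\sigma)}$.

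The band case is entirely parallel: $B_{([\gamma],\mathbf n,\lambda)}$ is built from the same combinatorial data attached to a closed curve, together with an indecomposable $k[t,t^{-1}]$-module, i.e. a single Jordan block with eigenvalue $\lambda$ encoding the monodromy along $\gamma$. Applying $(-)^\sigma$ again relabels arcs by $\sigma$ and carries $[\gamma]$ to $[\sigma\circ\gamma]$ and the grading to $\mathbf n\circ\sigma$ as above. The key point is that the scalar is unchanged: because $\sigma$ preserves the orientation of $\widetilde\surf$, it preserves the direction in which $\gamma$ is traversed, hence the monodromy operator, and so $\lambda$ is not inverted. This yields $(B_{([\gamma],\mathbf n,\lambda)})^\sigma\simeq B_{([\sigma\circ\gamma],\,\mathbf n\circ\sigma,\,\lambda)}$.

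The main obstacle is the bookkeeping that turns ``$(-)^\sigma$ relabels the string/band complex by $\sigma$'' into a genuine isomorphism of complexes rather than merely a matching of summands: one must check that $\sigma$ transports the differential components (the maps between projectives recorded by the traversed polygons) to exactly those prescribed by OPS for $\sigma\circ\gamma$, and, for bands, that orientation preservation pins down $\lambda\mapsto\lambda$ rather than $\lambda\mapsto\lambda^{-1}$. Both reduce to the facts that $\sigma$ is orientation-preserving and fixes $\widetilde D$ and $\widetilde\eta$, but verifying the differentials explicitly requires unwinding the OPS recipe term by term.
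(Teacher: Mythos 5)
Your proposal is correct, and for part (1) it essentially coincides with the paper's argument (the paper simply cites Lemma 5.4 of \cite{AmiotBrustle}, whose content is the direct transport of the string complex through the algebra automorphism induced by $\sigma$). For part (2), however, you take a genuinely different route. The paper does not verify the band complex entry by entry: after normalizing a primitive $\gamma$ so that $w_{\widetilde{\eta}}(\gamma_{(0,t_1)})=+1$ and $w_{\widetilde{\eta}}(\gamma_{(t_\ell,1)})=-1$, it exhibits the band object inside a triangle
\[
P_{(\alpha,\grading)}\longrightarrow B_{([\gamma],\grading,\lambda)}\longrightarrow e_{i_0}\widetilde{A}[-\grading(\gamma(0))]\longrightarrow P_{(\alpha,\grading)}[1]
\]
where $\alpha$ is an explicit string curve and the connecting data is the map $(1,\lambda):e_{i_0}\widetilde{A}^2\to e_{i_0}\widetilde{A}$; applying the exact functor $(-)^\sigma$ and invoking part (1) then identifies $(B_{([\gamma],\grading,\lambda)})^\sigma$ with $B_{([\sigma\gamma],\grading\circ\sigma,\lambda)}$, and non-primitive curves are handled afterwards by induction, their band objects being iterated self-extensions of the primitive ones. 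Your direct approach buys uniformity: primitive and non-primitive curves (Jordan blocks of any size) are treated at once, with no triangle and no induction, at the price of exactly the bookkeeping you acknowledge, namely checking that $\sigma$ carries each differential entry $p_j(\gamma)$ to $p_j(\sigma\circ\gamma)$. The paper's approach buys the converse: the only combinatorial input is the string case, already available from \cite{AmiotBrustle}. One small imprecision in your band case: the reason $\lambda$ is not inverted is that $\sigma\circ\gamma$ carries the same parametrization as $\gamma$ and, $\sigma$ being orientation-preserving and $\widetilde{\eta}$ being $\sigma$-invariant, satisfies the same winding-number normalization, so the scalar sits in the same matrix entry; the statement that $\sigma$ ``preserves the direction in which $\gamma$ is traversed'' holds for any homeomorphism and is not by itself what rules out $\lambda\mapsto\lambda^{-1}$ --- what orientation-preservation really prevents is the reversal of the clockwise orders (hence of the arrows of $\widetilde{Q}$), which would force a comparison with $(\sigma\circ\gamma)^{-1}$ instead.
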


\begin{proof}
The first statement is proved in Lemma 5.4 in \cite{AmiotBrustle}. Note that the case where $(\gamma,\grading)$ is an infinite arc is similar to the case where $\gamma$ has its endpoints in $\widetilde{M}_{\gpoint}$. 

For the second statement, let us define explicitely the bijection $B$. Let $([\gamma],\mathbf n,\lambda)$ be in $\pi_1^{\rm free,gr}(\surf)\times k^*$. Assume first that $\gamma$ is primitive. Let $\gamma:[0,1]\to \widetilde{\surf}$ be a  regular representative of $[\gamma]$ intersecting transversally the arcs of $\widetilde{D}$. Denote by $0\leq t_0<\cdots<t_\ell<1$ the elements in $[0,1]$ such that $\gamma(t_j)$ belongs to $\widetilde{D}$, and denote by $i_j$ the arc of $\widetilde{D}$ containing $\gamma(t_j)$.

Since $[\gamma]$ is defined up to free homotopy and since $w_{\widetilde{\eta}}(\gamma)=0$, we can assume that $t_0=0$,  that $w_{\widetilde{\eta}}(\gamma_{(0,t_1)})=+1$ and that $w_{\widetilde{\eta}}(\gamma_{(t_\ell,1)})=-1$.  

\[\scalebox{0.8}{
\begin{tikzpicture}[>=stealth,scale=0.8]
\draw[thick, red] (0,0)--(0,2)--(1.5,3)--(3,2);
\draw[thick, red] (4.5,3)--(3,2)--(3,0)--(4.5,-1)--(6,0)--(6,2);
\draw (0,0)--(3,0);
\draw (4.5,3)--(6,2);

\node at (0,0) {$\rpoint$};
\node at (0,2) {$\rpoint$};
\node at (1.5,3) {$\rpoint$};
\node at (3,2) {$\rpoint$};
\node at (4.5,3) {$\rpoint$};
\node at (3,0) {$\rpoint$};
\node at (4.5,-1) {$\rpoint$};
\node at (6,0) {$\rpoint$};
\node at (6,2) {$\rpoint$};

\node at (-0.4,1.3) {$\gamma(t_\ell)$};
\node at (2.6,1.3) {$\gamma(0)$};
\node at (6.4,1.2) {$\gamma(t_1)$};

\draw[thick, blue,->] (-1,1)--(7,1);
\node[blue] at (4.5,1.3) {$\gamma$};

\end{tikzpicture}}\]

For $j=0\ldots,\ell$, one can associate a path $p_j(\gamma)$ of the quiver $Q$ as in the following picture (where indices are taken modulo $\ell$).

\[\scalebox{1}{
\begin{tikzpicture}[>=stealth,scale=0.6]

\draw[red, thick] (2,-2)--node (A1){} (0,0)--node (A2){}(0,3)--node (A3){}(2,5)--node (A4){}(5,5)--node (A5){}(7,3)--node (A6){}(7,0)--node (A7){}(5,-2);
\draw[thick] (2,-2)--(5,-2);
\node at (3.5,-2) {$\gpoint$};
\node at (0,0) {$\rpoint$};
\node at (0,3) {$\rpoint$};
\node at (2,5) {$\rpoint$};
\node at (5,5) {$\rpoint$};
\node at (7,3) {$\rpoint$};
\node at (7,0) {$\rpoint$};
\node at (5,-2) {$\rpoint$};
\node at (2,-2) {$\rpoint$};

\draw[->,very thick, cyan,  loosely dotted] (A7)--(A6);
\draw[->,very thick, cyan,  loosely dotted] (A6)--(A5);
\draw[->,very thick, cyan] (A5)--(A4);
\draw[->,very thick, cyan] (A4)--(A3);
\draw[->,very thick, cyan] (A3)--(A2);
\draw[->,very thick, cyan, loosely dotted] (A2)--(A1);

\draw[thick, blue,->] (-1,1.5)..controls (0,1.5) and (6,4)..(7,4.5);

\node[red] at (-0.5,1) {$i_j$};
\node[red] at (7,4) {$i_{j+1}$};

\node[cyan] at (2.5,3.5) {$p_j(\gamma)$};
\node[blue] at (7.5,4.5) {$\gamma$};

\end{tikzpicture}}\]

As a graded projective $\widetilde{A}$-module, $B_{([\gamma],\mathbf n,\lambda)}$ is defined to be $$B_{([\gamma],\mathbf n,\lambda}):=\bigoplus_{j=0}^\ell e_{i_j} \widetilde{A}[-{\mathbf n}(\gamma(t_j)].$$
The differential is given by the following $(\ell+1)\times (\ell+1)$ matrix $(d_{(j,k)})_{j,k}$
\begin{itemize} 
\item if $ w_{\eta}(\gamma_{|_{(t_j,t_{j+1})}})=+1$, then  $d_{(j+1,j)}=p_j(\gamma)[-\grading(\gamma(t_j))]$ 

\item if $w_{\eta}(\gamma_{|_{(t_j,t_{j+1})}})=-1$, then  $d_{(j,j+1)}=p_j(\gamma)[-\grading(\gamma(t_{j+1}))]$

\item $d_{(0,\ell)}=\lambda p_{\ell}(\gamma)[-\grading (\gamma (t_{\ell}))]$,

\item all other values of  $d_{(j,k)}$ are $0$. 
\end{itemize}
Note that in case $\ell=1$, then we obtain $d_{(0,1)}=p_0[-\grading (\gamma (t_0))]+\lambda p_{1}[-\grading (\gamma (t_{1})]$.

With the hypothesis on $\gamma$, we define an element $\alpha\in \pi_1(\widetilde{\surf},\widetilde{M}_{\gpoint})$ as in the following picture,
\[\scalebox{0.8}{
\begin{tikzpicture}[>=stealth,scale=0.8]
\draw[thick, red] (0,0)--(0,2)--(1.5,3)--(3,2);
\draw[thick, red] (4.5,3)--(3,2)--(3,0)--(4.5,-1)--(6,0)--(6,2);
\draw (0,0)--(3,0);
\draw (4.5,3)--(6,2);

\node at (0,0) {$\rpoint$};
\node at (0,2) {$\rpoint$};
\node at (1.5,3) {$\rpoint$};
\node at (3,2) {$\rpoint$};
\node at (4.5,3) {$\rpoint$};
\node at (3,0) {$\rpoint$};
\node at (4.5,-1) {$\rpoint$};
\node at (6,0) {$\rpoint$};
\node at (6,2) {$\rpoint$};
\node at (1.5,0) {$\gpoint$};
\node at (5.25,2.5) {$\gpoint$};

\draw[thick, blue,->] (-1,1)--(7,1);
\node[blue] at (4.5,1.3) {$\gamma$};

\draw[thick, dark-green,->] (1.5,0)..controls (3,1) and (6,0.8)..(7,0.8);
\draw[thick, dark-green, ->] (-1,1.2).. controls (0,1.2) and (4,1)..(5.25,2.5);
\node[dark-green] at (1.5,1.5) {$\alpha$};

\end{tikzpicture}}\]
and define a grading on it such that $\grading(\mathbf (\alpha(t_j)))=\grading(\mathbf (\gamma(t_j)))$ for $j=1,\ldots,\ell$. Then one immediately sees that the map $(1,\lambda):e_{i_0}\widetilde{A}^2\to e_{i_0}\widetilde{A}$ induces a triangle 
$$\xymatrix{P_{(\alpha,\grading)}\ar[r] & B_{([\gamma],\grading,\lambda)}\ar[r] & e_{i_0}\widetilde{A}[-\grading (\gamma(0))]\ar[r] & P_{(\alpha,\grading)}[1]}.$$

Then we obtain statement $(2)$ for primitive curves using statement $(1)$.

Finally if $\gamma$  is a closed curve which is not primitive, it is a power of a primitive curve $\alpha$.  Then the object $B_{([\gamma],\grading,\lambda)}$ is an iterated extension of the object $B_{([\alpha],\grading,\lambda)}$. Hence an easy induction gives statement $(2)$. 

\end{proof}

We have $P_{(\gamma,\grading)}\simeq P_{(\gamma',\grading')}$ if and only if $\gamma=\gamma'$ or $\gamma=\gamma'^{-1}$ and $\grading=\grading'$.  We have 
$B_{([\gamma],\grading,\lambda)}\simeq B_{([\gamma'],\grading',\lambda')}$ if and only if $([\gamma],\grading,\lambda)=([\gamma'],\grading',\lambda')$ or $([\gamma],\grading,\lambda)=([\gamma'^{-1}],\grading',\lambda^{-1})$. Hence the indecomposable objects of $\cD^{\rm b}(A)$ are in bijection with the following five sets:

\begin{enumerate}
\item[$(\widetilde{\surf}1)$] $\left\{ (\gamma,\grading)\in \pi_1^{\rm gr}(\widetilde{\surf},\widetilde{M}_{\gpoint},\widetilde{P}_{\rpoint})\ |\  \gamma^{-1}\neq \sigma \gamma\right\}/\sim$ 

where $(\gamma,\grading)\sim (\sigma\gamma,\grading\circ\sigma)\sim (\gamma^{-1},\grading)$,

\item[$(\widetilde{\surf}2)$] $\left\{ (\gamma,\grading,\epsilon)\in \pi_1^{\rm gr}(\widetilde{\surf},\widetilde{M}_{\gpoint}, \widetilde{P}_{\rpoint})\times \{\pm 1\}, \ |\  \sigma\gamma=\gamma^{-1}\right\}/\sim$ 

where $(\gamma,\grading,\epsilon)\sim (\gamma^{-1},\grading,\epsilon)$.
\item[$(\widetilde{\surf}3)$] $\left\{([\gamma],\grading, \lambda)\in \pi_1^{\rm free,gr}(\widetilde{\surf})\times k^*\ |\ [\sigma\gamma]\neq [\gamma],[\gamma^{-1}]\right\}/ \sim$ 

where $([\gamma],\grading, \lambda)\sim ([\gamma^{-1}],\grading, \lambda^{-1})\sim ([\sigma\gamma],\grading\circ\sigma, \lambda)$;

\item[$(\widetilde{\surf}3')$] $\left\{([\gamma],\grading, \lambda,\epsilon)\in \pi_1^{\rm free,gr}(\widetilde{\surf})\times k^*\times\{\pm 1\}\ |\ [\sigma\gamma]=[\gamma]\right\}/ \sim$ 

where $([\gamma],\grading, \lambda,\epsilon)\sim ([\gamma^{-1}],\grading, \lambda^{-1},\epsilon)$;

\item[$(\widetilde{\surf}4)$] $\left\{([\gamma],\grading, \lambda)\in \pi_1^{\rm free,gr}(\widetilde{\surf})\times k^*\setminus \{\pm 1\}\ |\ [\sigma\gamma]=[\gamma^{-1}]\right\}/ \sim$ 

where $([\gamma],\grading, \lambda)\sim ([\gamma^{-1}],\grading, \lambda^{-1})\sim ([\sigma\gamma],\grading\circ\sigma, \lambda)$;

\item[$(\widetilde{\surf}5)$] $\left\{([\gamma],\grading, \lambda,\epsilon)\in \pi_1^{\rm free,gr}(\widetilde{\surf})\times \{\pm 1\}\times\{\pm 1\}\ |\ [\sigma\gamma]=[\gamma^{-1}]\right\}/ \sim$ 

where $([\gamma],\grading, \lambda,\epsilon)\sim ([\gamma^{-1}],\grading, \lambda^{-1},\epsilon)$

\end{enumerate}

\section{Indecomposables in term of graded curves  on the orbifold}
 The aim of this section is to use the projection map $p:\widetilde{\surf}\to \surf$ to describe the sets $(\widetilde{\surf}_1)-(\widetilde{\surf}_5)$ in terms of graded curves on the orbifold $\surf$.

  We denote by $\pi_1^{\rm orb}(\surf, M_{\gpoint}, P_{\rpoint})$  the quotient
of $\pi_1(\surf \setminus X_\cross, M_{\gpoint},P_{\rpoint})$ by the equivalence relation given by

\begin{center} 
 \scalebox{0.9}{
 
\begin{tikzpicture}[>=stealth,scale=0.8]

 \node (P) at (0 , 0) {$\cross$};

 \draw[->>,dark-green] (-2,-2) .. controls (5,2) and (-5,2) .. (2,-2);
 
 \node at (3.5,0) {$=$};
 
\begin{scope}[xshift=7cm,yshift=0cm]
 
 \node at (0,0) {$\cross$};

 \draw[->>,dark-green] (-2,-2) .. controls (2.5,-0.5) and (-3,0.5) .. (0,1) .. controls (3,0.5) and (-2.5,-0.5) .. (2,-2); 
\end{scope} 

\end{tikzpicture}
}
\end{center}
 The set $\pi_1^{\rm orb,free}(\surf)$ is the set of conjugacy classes of the fundamental orbifold group $\pi_1^{\rm orb}(\surf)$.

Now recall from \cite[Proposition 5.5]{AmiotPlamondon} that there is a map $$\Phi:\pi_1(\widetilde{\surf},\widetilde{M}_{\gpoint}, \widetilde{P}_{\rpoint})\to \pi_1^{\rm orb}(\surf,M_{\gpoint},P_{\rpoint})$$ which is faithful in the sense that any non contractible loop in $\pi_1(\widetilde{\surf},\widetilde{M}_{\gpoint}, \widetilde{P}_{\rpoint})$ gives rise to a non zero element in $\pi_1^{\rm orb}(\surf,M_{\gpoint},P_{\rpoint})$. This map induces a well defined map by \cite[Proposition 5.14]{AmiotPlamondon}
$$\Psi: \pi_1^{\rm free}(\widetilde{\surf})\to \pi_1^{\rm orb, free}(\surf). $$

 These maps are essential to translate the bijection between indecomposable objects with the sets $(\widetilde{\surf}1)$ to $(\widetilde{\surf}5)$ above in terms of graded curves on the orbifold $\surf$. 

\subsection{String objects}\label{section4.1}

The first step consists of the definition of the set $\pi_1^{\rm orb, gr}(\surf,M_{\gpoint}, P_{\rpoint})$ together with a map

$$\pi_1^{\rm gr}(\widetilde{\surf},\widetilde{M}_{\gpoint}, \widetilde{P}_{\rpoint})\to \pi_1^{\rm orb, gr}(\surf,M_{\gpoint}, P_{\rpoint}).$$

Note that since $\widetilde{\eta}$ is $\sigma$-invariant, we obtain a line field $\eta$ on $\surf\setminus (X_\cross\cup P_{\rpoint})$. Hence there is a natural map

$$\pi_1^{\rm gr}(\widetilde{\surf}\setminus \widetilde{X}_{\cross},\widetilde{M}_{\gpoint}, \widetilde{P}_{\rpoint})\to \pi_1^{\rm gr}(\surf\setminus X_{\cross},M_{\gpoint}, P_{\rpoint}).$$

\begin{definition}
 Let $\gamma$ be in $\mathcal{C}^1((0,1), \surf\setminus (X_\cross\cup P_{\rpoint})$ such that its preimages $\widetilde{\gamma}$ and $\sigma\widetilde{\gamma}$ in $\mathcal{C}^1((0,1),\widetilde{\surf}\setminus (\widetilde{X}_\cross\cup \widetilde{P}_{\rpoint})$ do not contain any contractible loops and intersect transversally and minimally the dissection $\widetilde{D}$.
 
Then, one defines a grading on $\gamma$ as a map $\grading: \gamma(0,1)\cap D\to \mathbb Z$ such that 
$$\mathbf{n}(\gamma(t_{i+1}))=\mathbf{n}(\gamma(t_i))+w_{\eta}(\gamma_{|_{[t_i,t_{i+1}]}}),$$
if $\gamma(t_i)$ and $\gamma(t_{i+1})$ are two consecutive intersections of $\gamma$ with $D$.

\end{definition}
Since the map $\Phi:\pi_1(\widetilde{\surf},\widetilde{M}_{\gpoint}, \widetilde{P}_{\rpoint})\to \pi_1^{\rm orb}(\surf,M_{\gpoint}, P_{\rpoint})$ is surjective, any element in $\pi_1^{\rm orb} (\surf,M_{\gpoint}, P_{\rpoint})$ has a representant that can be gradable. 

We would like now to check that the grading is well-defined on the set $\pi_1^{\rm orb}(\surf, M_{\gpoint}, P_{\rpoint}).$ This comes from the following two facts:

\begin{enumerate}
\item If $(\widetilde{\gamma},\widetilde{\grading})$ is a graded curve in $\widetilde{\surf}$, and $(\gamma,\grading)$ is a graded curve in $\surf$ such that $\Phi(\widetilde{\gamma})=\gamma$ and $\grading (\gamma (t_1))=\widetilde{\grading }(\widetilde{\gamma}(t_1))$, then for any $i$, we have 
$\grading (\gamma (t_i))=\widetilde{\grading }(\widetilde{\gamma}(t_i)).$ This comes from the fact that  $\eta$ is the projection of $\widetilde{\eta}$.

\item If $(\gamma,\grading)$ and $(\gamma',\grading')$ be two graded curves on $\surf$ that have the same grading at their first intersection point with $D$, then they admit the same grading on any intersection point with $D$. Indeed their preimages starting at the same point are homotopic, so they admit the same grading in $\widetilde{\surf}$ with respect to the dissection $\widetilde{D}$.
\end{enumerate}

We denote by $\pi_1^{\rm orb, gr}(\surf,M_{\gpoint},P_{\rpoint} )$, the set of graded curves up to homotopy, which is now well-defined. It comes then with a natural surjective map 
$$\pi_1^{\rm orb,gr}(\surf,M_{\gpoint},P_{\rpoint})\to \pi_1^{\rm orb}(\surf,M_{\gpoint}, P_{\rpoint})$$ whose fiber is in bijection with $\mathbb Z$.

An element $\widetilde{\gamma}$ in $\pi_1(\widetilde{\surf},\widetilde{M}_{\gpoint},\widetilde{P}_{\rpoint})$ satisfying $\sigma \widetilde{\gamma}=\widetilde{\gamma}^{-1}$ has its image $\gamma$ in $\pi_1^{\rm orb}(\surf, M_{\gpoint},P_{\rpoint})$ that satisfies $\gamma=\gamma^{-1}$. Conversely, an element $\gamma$ in $\pi_1^{\rm orb}(\surf, M_{\gpoint},P_{\rpoint})$ such that $\gamma=\gamma^{-1}$ has its preimages $\widetilde{\gamma}$ and $\sigma \widetilde{\gamma}$ satisfying $\sigma \widetilde{\gamma}.\widetilde{\gamma}=1$ since there is no torsion in $\pi_1(\widetilde{\surf}, \widetilde{M}_{\gpoint}, \widetilde{P}_{\rpoint})$.

Therefore the sets $(\widetilde{S}1)$ and $(\widetilde{\surf}2)$ described above are respectively in bijection with
\begin{enumerate}

\item[$(\surf 1)$] $\left\{(\gamma,\grading)\in \pi_1^{\rm orb,gr}(\surf,M_{\gpoint}, P_{\rpoint})\ |\ \gamma\neq \gamma^{-1}\right\}/\sim$ where $(\gamma,\grading)\sim (\gamma^{-1},\grading)$;
\item[$(\surf 2)$] $\left\{ (\gamma,\grading,\epsilon)\in \pi_1^{\rm orb,gr}(\surf,M_{\gpoint}, P_{\rpoint})\times \{\pm 1\}\ |\ \gamma=\gamma^{-1}\right\}$
\end{enumerate}

\subsection{Band objects}\label{section4.2}

Here again, we first define  the notion of gradable closed curves on the orbifold $\surf$.

Let $[\gamma] \in \pi_1^{\rm orb, free} (\surf)$ be represented by a smooth curve $\gamma$ without contractible loops and intersecting transversally $D$. Denote by $x_0=\gamma (0)$ its starting point, and by $x_0^+$, and $x_0^-$ its preimages in $\widetilde{\surf}$. There exists a curve $\widetilde{\gamma}\in \mathcal{C}^1((0,1),\surf)$ satisfying :

\begin{itemize}
\item $\widetilde{\gamma}$ does not contain any contractible loops;
\item $\widetilde{\gamma}(0)=x_0^+$, and $\widetilde{\gamma}(1)\in \{x_0^+,x_0^-\}$;
\item
$\dot{\widetilde{\gamma}} (0)=\dot{\widetilde{\gamma}}(1)$ if $\widetilde{\gamma}(1)=x_0^+$;
\item $\sigma(\dot{\widetilde{\gamma}}(0))=(T\sigma)\dot{\widetilde{\gamma}}(1)$ if $\widetilde{\gamma}(1)=x_0^-$, and where $T\sigma: T_{x_0^-}\widetilde{\surf} \to T_{x_0^+}\widetilde{\surf}$ is induced by the diffeomorphism $\sigma$.
\end{itemize}
Then the winding number  of $\widetilde{\gamma}$ with respect to $\widetilde{\eta}$ is defined, and so is the winding number of $\gamma=p\widetilde{\gamma}$ with respect to $\eta$.
Moreover we have 
$$w_{\bar{\eta}} (\gamma)=w_{\eta}(\widetilde{\gamma}).$$

Furthermore since the preimage $\tilde{\gamma}$ (starting in $x_0^+$) is unique up to homotopy, the winding number of $[\gamma]$ is well defined as a map 
$$w_{\eta}:\pi_1^{\rm orb, free}(\surf)\longrightarrow \mathbb Z.$$

\begin{definition}
Denote by $\mathbb S^1$ the segment $[0,1]$, where $0$, and $1$ are identified.
 Let $\gamma:\mathbb S^1 \to\surf\setminus (X_\cross\cup P_{\rpoint})$ be a closed smooth map with $\gamma(0)=x_0$ and such that its preimage $\widetilde{\gamma}:[0,1]\to \widetilde{\surf}$ on $\widetilde{\surf}$ starting at $x_0^+$ is as above. 
 A \emph{grading} on $\gamma$ is a map $\mathbf{n}:\gamma(\mathbb S^1)\cap D\to \mathbb Z$ satisfying:
$$\mathbf{n}(\gamma(t_{i+1}))=\mathbf{n}(\gamma(t_i))+w_{\eta}(\gamma_{|_{[t_i,t_{i+1}]}}),$$
if $\gamma(t_i)$ and $\gamma(t_{i+1})$ are two consecutive intersections of $\gamma$ with $D$.
\end{definition}

Note that if $\gamma$ has a grading and if $\widetilde{\gamma}$ is not closed (that is $\widetilde{\gamma}(1)=x_0^-$), then one can consider the closed curve $\beta:=\sigma\widetilde{\gamma}.\widetilde{\gamma}:[0,2]\to \widetilde{\surf}$. The grading $\grading$ defines a grading $\widetilde{\grading}$ on $\beta$ (and on $\widetilde{\gamma}$) with for any $i$
$\widetilde{\mathbf{n}}(\widetilde{\gamma}(t_{i+1}))=\widetilde{\mathbf{n}}(\widetilde{\gamma}(t_i))+w_{\widetilde{\eta}}(\widetilde{\gamma}_{|_{[t_i,t_{i+1}]}})$ and such that

\begin{eqnarray*} (1) \label{gradingeq} \quad\widetilde{\mathbf{n}}((\widetilde{\gamma})(t_\ell)) & = & \widetilde{\grading}(\beta(t_\ell))\\ & = & \widetilde{\mathbf{n}}(\beta(1+t_1))-w_{\widetilde{\eta}}(\beta_{|_{[t_\ell,1+t_1]}})\\ & =& \widetilde{\mathbf{n}}(\sigma(\widetilde{\gamma})(t_1)))-w_{\widetilde{\eta}}(\beta{|_{[t_\ell,1+t_1]}})\\ & = & \widetilde{\mathbf{n}}(\widetilde{\gamma}(t_1)))-w_{\widetilde{\eta}}(\beta{|_{[t_\ell,1+t_1]}}) 
\end{eqnarray*}

Then, with the same argument as before, we see that if the gradings of two graded closed curves that are equal when viewed in $\pi_1^{\rm orb, free}(\surf)$ coincide at their first point, then they coincide at each intersection point with the dissection. Therefore, the set $\pi_1^{\rm orb, free, gr}(\surf)$ is well-defined. 

Moreover we have the following

\begin{proposition}\label{prop gradability and winding}
Let $[\gamma]\in \pi_1^{\rm orb, free}(\surf)$. Then $[\gamma]$ is gradable if and only if $w_{\eta}([\gamma])=0$.
\end{proposition}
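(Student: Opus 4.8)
The plan is to relate the orbifold winding number to the winding number on the double cover and to reduce everything to the corresponding (already understood) gradability criterion for the gentle algebra $\widetilde{A}$ on $\widetilde{\surf}$. Recall that a graded structure on a closed curve amounts to a compatible $\bZ$-valued labelling of its intersections with $D$, and going once around the loop forces a consistency condition. Concretely, if $[\gamma]$ has intersections $\gamma(t_0),\dots,\gamma(t_\ell)$ with $D$, then iterating the defining relation $\grading(\gamma(t_{i+1}))=\grading(\gamma(t_i))+w_\eta(\gamma_{|_{[t_i,t_{i+1}]}})$ around the whole loop returns to the starting value if and only if the total winding number $\sum_i w_\eta(\gamma_{|_{[t_i,t_{i+1}]}})=w_\eta([\gamma])$ vanishes. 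So the content is really the assertion that this local-to-global obstruction is exactly the orbifold winding number, which is well-defined by the discussion preceding the statement.

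First I would recall the definition of $w_\eta([\gamma])$ given just above: lift $\gamma$ to the canonical preimage $\widetilde{\gamma}$ starting at $x_0^+$, and use the established equality $w_{\eta}(\gamma)=w_{\widetilde\eta}(\widetilde\gamma)$ together with the fact that $\widetilde\gamma$ is unique up to homotopy. This lets me transport the whole problem to $\widetilde{\surf}$. For the easy direction, if $[\gamma]$ is gradable then the grading $\grading$ on $\gamma$ induces (via the displayed computation $(1)$ preceding the statement) a genuine grading $\widetilde{\grading}$ on the closed lift $\beta:=\sigma\widetilde\gamma.\widetilde\gamma$ (or on $\widetilde\gamma$ itself when $\widetilde\gamma$ is already closed). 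A closed curve on $\widetilde{\surf}$ that is gradable with respect to $\widetilde\eta$ must have $w_{\widetilde\eta}=0$ by the gentle-algebra statement already recorded in the definition of $\pi_1^{\rm gr,free}(\widetilde{\surf})$. When $\widetilde\gamma$ is closed this gives $w_{\widetilde\eta}(\widetilde\gamma)=0$ directly; when it is not, I would note $w_{\widetilde\eta}(\beta)=w_{\widetilde\eta}(\sigma\widetilde\gamma)+w_{\widetilde\eta}(\widetilde\gamma)=2\,w_{\widetilde\eta}(\widetilde\gamma)=0$, using the $\sigma$-invariance of $\widetilde\eta$ to get $w_{\widetilde\eta}(\sigma\widetilde\gamma)=w_{\widetilde\eta}(\widetilde\gamma)$. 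Either way $w_\eta([\gamma])=w_{\widetilde\eta}(\widetilde\gamma)=0$.

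For the converse, suppose $w_\eta([\gamma])=w_{\widetilde\eta}(\widetilde\gamma)=0$. I would construct the grading directly: fix an arbitrary integer value $\grading(\gamma(t_0))$ at the first intersection and propagate it around the loop via the defining relation. The only possible obstruction is that, upon returning to $\gamma(t_0)$ after one full revolution, the accumulated value differs from the initial one; but that discrepancy is precisely $w_\eta([\gamma])=0$, so the labelling closes up consistently and defines a grading. The well-definedness up to homotopy of the resulting element of $\pi_1^{\rm orb,free,gr}(\surf)$ has already been settled in the paragraph immediately before the proposition, so nothing further is needed there.

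The main obstacle I anticipate is the bookkeeping in the non-closed-lift case, namely correctly matching the winding contribution of the "transition'' segment of $\beta$ (the piece joining $\widetilde\gamma(1)=x_0^-$ back to the start of $\sigma\widetilde\gamma$) and checking that the tangency/transversality conditions imposed on $\widetilde\gamma$ at its endpoints — the conditions $\sigma(\dot{\widetilde\gamma}(0))=(T\sigma)\dot{\widetilde\gamma}(1)$ — make the concatenation $\sigma\widetilde\gamma.\widetilde\gamma$ a genuinely regular (smooth, no spurious contractible loop) closed curve to which the winding-number additivity and the gentle-algebra gradability criterion legitimately apply. Once the regularity of $\beta$ and the additivity $w_{\widetilde\eta}(\beta)=2\,w_{\widetilde\eta}(\widetilde\gamma)$ are justified, the equivalence $w_\eta([\gamma])=0\Leftrightarrow$ gradable follows immediately from the corresponding statement on $\widetilde{\surf}$.
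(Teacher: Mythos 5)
Your proof is correct and takes essentially the same route as the paper: both arguments pass to the double cover via the lift $\widetilde{\gamma}$ (splitting into the cases where $\widetilde{\gamma}$ is closed or not), apply the gradable-iff-zero-winding criterion for closed curves on $\widetilde{\surf}$ to $\beta=\sigma\widetilde{\gamma}.\widetilde{\gamma}$, and use the identity $w_{\widetilde{\eta}}(\sigma\widetilde{\gamma}.\widetilde{\gamma})=2w_{\widetilde{\eta}}(\widetilde{\gamma})=2w_{\eta}(\gamma)$. The only cosmetic difference is that you establish the converse by propagating a grading directly on $\surf$ and identifying the closing-up obstruction with $w_{\eta}([\gamma])$, whereas the paper runs the same equivalence through the compatibility condition $(1)$ on the closed lift $\beta$; the content is identical.
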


\begin{proof}
Let $\gamma$ representing $[\gamma]$ be such that its pre-image $\widetilde{\gamma}$ is as before. If $\widetilde{\gamma}$ is closed on $\surf$, this is clear since we have 
$$\gamma \textrm{ gradable }\Leftrightarrow\ \widetilde{\gamma} \textrm{ gradable }\Leftrightarrow w_{\widetilde{\eta}}(\widetilde{\gamma})=0\ \Leftrightarrow\ w_{\eta}(\gamma)=0.$$

If $\widetilde{\gamma}$ is not closed, then we have 
\begin{eqnarray*}\gamma \textrm{ gradable } & \Leftrightarrow &  \widetilde{\gamma} \textrm{ is gradable with condition \eqref{gradingeq}}\\ &  \Leftrightarrow & w_{\widetilde{\eta}}(\sigma\widetilde{\gamma}.\widetilde{\gamma})=0\ \\ & \Leftrightarrow&\ w_{\eta}(\gamma)=0,\end{eqnarray*}
since $w_{\widetilde{\eta}}(\sigma\widetilde{\gamma}.\widetilde{\gamma})=2w_{\widetilde{\eta}}(\widetilde{\gamma})=2w_{\eta}(\gamma).$

\end{proof}

Therefore we obtain a map 
$$\pi_{1}^{\rm orb, free, gr}(\surf)\longrightarrow \pi_1^{\rm orb, free}(\surf),$$ whose image consists of curves with winding number $0$, and whose fiber is in bijection with $\mathbb Z$.

\begin{definition}
We call an element $\gamma\in\pi_1(\surf,x_0)$ \emph{primitive} if it is torsionfree, and if it is a generator of the maximal cyclic group containing it.

Hence if $\gamma\in \pi_1^{\rm orb}(\surf,x_0)$ satisfies $\gamma^2\neq 1$ then $\gamma$ is torsionfree, and so can be written in a unique way as a positive power of a primitive element.

\end{definition}

Now, a small adaptation of Corollary 5.18 in \cite{AmiotPlamondon} yields the following.

\begin{proposition}\label{prop::bijection courbes S et Stilde}

Let $\Psi:\pi_1^{\rm free}(\widetilde{\surf})\to \pi_1^{\rm orb,free}(\surf)$ be the map induced by the projection $p:\widetilde{\surf}\to \surf$. 
 \begin{enumerate}
  \item We have a bijection between the following sets:
    \begin{enumerate}
     \item 
     \( \Big\{ \{[\widetilde{\gamma}], [\sigma\widetilde{\gamma}] \} \ | \ [\widetilde{\gamma}] \in \pi_1^{\rm free}(\widetilde{\surf}) 
              \textrm{ primitive with $w_{\widetilde{\eta}}(\widetilde{\gamma})=0$  and } [\sigma\widetilde{\gamma}] \neq [\widetilde{\gamma}], [\widetilde{\gamma}^{-1}] \Big\} ;
           \)
     \item \(
             \Big\{ [\gamma] \in \pi_1^{\rm orb,free}(\surf) \ | \ [\gamma]\in {\rm Im} \Psi \ \textrm{primitive with }w_{\eta}(\gamma)=0\  , [\gamma] \neq [\gamma^{-1}] \Big\} .
           \)
    \end{enumerate}
    
  \item We have a bijection between the sets
    \begin{enumerate}
     \item \( \Big\{ \{[\widetilde{\gamma}], [\sigma\widetilde{\gamma}] \} \ | \ [\widetilde{\gamma}] \in \pi_1^{\rm free}(\widetilde{\surf}) 
              \textrm{ primitive with $w_{\widetilde{\eta}}(\widetilde{\gamma})=0$ and such that } [\sigma\widetilde{\gamma}]  = [\widetilde{\gamma}^{-1}] \Big\} ;
           \)
     \item \(
             \Big\{ [\gamma] \in \pi_1^{\rm orb,free}(\surf) \ | \ [\gamma] \textrm{  primitive with $w_{\eta}(\gamma)=0$ and } [\gamma] = [\gamma^{-1}] \Big\} .
           \)
    \end{enumerate}
    
  \item We have a bijection between the sets
    \begin{enumerate}
     \item \( \Big\{ [\widetilde{\gamma}] \ | \ [\widetilde{\gamma}] \in \pi_1^{\rm free}(\widetilde{\surf}) 
              \textrm{ primitive with $w_{\widetilde{\eta}}(\widetilde{\gamma})=0$ and such that } [\sigma\widetilde{\gamma}]  = [\widetilde{\gamma}] \Big\} \times k^* \times \bZ_2;
           \)
     \item \(
             \Big\{ [\alpha] \in \pi_1^{\rm orb,free}(\surf) \ | \ [\alpha] \notin {\rm Im} \Psi \textrm{ primitive with $w_{\eta}(\alpha)=0$} \Big\} \times k^*.
           \)
    \end{enumerate}

 \end{enumerate}
\end{proposition}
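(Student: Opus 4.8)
The plan is to reduce everything to the deck-transformation structure of the (orbifold) double cover $p\colon\widetilde{\surf}\to\surf$ and to invoke the corresponding statements of \cite[Propositions 5.5, 5.14 and Corollary 5.18]{AmiotPlamondon}, adapting them to the present graded line-field setting. The starting point is the dichotomy furnished by $p$: a primitive class $[\gamma]\in\pi_1^{\rm orb,free}(\surf)$ either lifts to a closed curve on $\widetilde{\surf}$, in which case $[\gamma]\in{\rm Im}\,\Psi$ and its two lifts form a single $\sigma$-orbit $\{[\widetilde{\gamma}],[\sigma\widetilde{\gamma}]\}$ with $\Psi([\widetilde{\gamma}])=\Psi([\sigma\widetilde{\gamma}])=[\gamma]$; or it does not, in which case $[\gamma]\notin{\rm Im}\,\Psi$ while $[\gamma^2]$ lifts to a closed curve whose class $[\widetilde{\gamma^2}]$ satisfies $[\sigma\widetilde{\gamma^2}]=[\widetilde{\gamma^2}]$. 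These are exactly the two situations isolated in \cite{AmiotPlamondon}, and the three announced bijections are obtained by organising primitive winding-zero classes according to this dichotomy together with the position of $[\sigma\widetilde{\gamma}]$ relative to $[\widetilde{\gamma}]$ and $[\widetilde{\gamma}^{-1}]$.

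First I would settle the liftable case $[\gamma]\in{\rm Im}\,\Psi$, which feeds parts $(1)$ and $(2)$. Here the correspondence is simply $\{[\widetilde{\gamma}],[\sigma\widetilde{\gamma}]\}\mapsto\Psi([\widetilde{\gamma}])$; it is well defined because $p\circ\sigma=p$, injective because a liftable class determines the $\sigma$-orbit of its lifts, and surjective by definition of ${\rm Im}\,\Psi$. Since $\Psi([\widetilde{\gamma}^{-1}])=[\gamma^{-1}]$, the equality $[\gamma]=[\gamma^{-1}]$ holds if and only if $[\widetilde{\gamma}^{-1}]$ lies in the fibre $\{[\widetilde{\gamma}],[\sigma\widetilde{\gamma}]\}$. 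The key point, proved in the next paragraph, is that for a primitive $[\widetilde{\gamma}]$ in this case one has $[\sigma\widetilde{\gamma}]\neq[\widetilde{\gamma}]$; granting this, $[\gamma]=[\gamma^{-1}]$ forces $[\sigma\widetilde{\gamma}]=[\widetilde{\gamma}^{-1}]$ (part $(2)$), while $[\gamma]\neq[\gamma^{-1}]$ corresponds to $[\sigma\widetilde{\gamma}]\neq[\widetilde{\gamma}],[\widetilde{\gamma}^{-1}]$ (part $(1)$).

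The crux is therefore the claim that, for a primitive class, $[\sigma\widetilde{\gamma}]=[\widetilde{\gamma}]$ can occur only when $[\gamma]\notin{\rm Im}\,\Psi$. To see this I would put $\widetilde{\gamma}$ in minimal position, so that $[\sigma\widetilde{\gamma}]=[\widetilde{\gamma}]$ means $\sigma$ preserves this representative setwise; as $\sigma$ has order two and fixes only the branch points $\widetilde{X}$, which $\widetilde{\gamma}$ avoids, it acts on $\widetilde{\gamma}\cong\mathbb S^1$ as a free involution, that is, antipodally. Hence $p$ restricts to a double cover of a primitive loop $\alpha$ downstairs with $[\widetilde{\gamma}]=[\widetilde{\alpha^2}]$. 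If $\alpha$ lifted to a closed curve then $\widetilde{\alpha^2}$ would be a square, contradicting primitivity of $\widetilde{\gamma}$; therefore $[\alpha]\notin{\rm Im}\,\Psi$, which is precisely the setting of part $(3)$. Conversely every primitive $[\alpha]\notin{\rm Im}\,\Psi$ produces a $\sigma$-invariant primitive lift $[\widetilde{\alpha^2}]$, giving the topological bijection underlying part $(3)$. Throughout, primitivity is transported using that the kernel of the covering monodromy has index two, so maximal cyclic subgroups correspond under $p_*$; and the winding-number condition transfers via the identities $w_{\eta}(\gamma)=w_{\widetilde{\eta}}(\widetilde{\gamma})$ in the liftable case and $w_{\widetilde{\eta}}(\widetilde{\alpha^2})=2\,w_{\eta}(\alpha)$ of Proposition \ref{prop gradability and winding} in the non-liftable case.

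It remains to match the decorations in part $(3)$, and this is the step I expect to be the main obstacle. Unlike parts $(1)$ and $(2)$, which are genuine $\sigma$-orbits and carry the band parameter along unchanged, the $\sigma$-invariant bands of part $(3)$ are fixed by the $\sigma$-twist, so by Corollary \ref{cor::objectsAobjectsAtilde} each contributes two objects downstairs, accounting for the extra factor $\bZ_2$ on the cover side. One must then exhibit a bijection between $\{\sigma\text{-invariant primitive }[\widetilde{\gamma}]\}\times k^*\times\bZ_2$ and $\{\text{primitive }[\alpha]\notin{\rm Im}\,\Psi\}\times k^*$: the topological factors already correspond by the previous paragraph, so the content is a bijection $k^*\times\bZ_2\cong k^*$ compatible with the band construction. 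This is where the hypotheses on $k$ enter: since $k$ is algebraically closed of characteristic $\neq2$, the squaring map $k^*\to k^*$ is two-to-one and surjective, and the two square roots of the orbifold band parameter are exactly interchanged by the $\bZ_2$-action, so identifying $(\lambda,\epsilon)$ with the appropriate square root realises the required bijection. Making this identification canonical, and checking its compatibility with the grading and with the isomorphisms $B_{([\gamma],\grading,\lambda)}\simeq B_{([\gamma^{-1}],\grading,\lambda^{-1})}$, is the delicate bookkeeping at the heart of the adaptation of \cite[Corollary 5.18]{AmiotPlamondon}.
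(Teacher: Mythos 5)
Your proposal is correct and follows essentially the same route as the paper: both reduce parts $(1)$ and $(2)$ to the $\Psi$-induced bijections of \cite[Corollary 5.18]{AmiotPlamondon} using the compatibility $w_{\widetilde{\eta}}([\widetilde{\gamma}])=w_{\eta}(\Psi[\widetilde{\gamma}])$, and for part $(3)$ both use the relation $\Psi([\widetilde{\gamma}])=[\alpha^2]$ with $\alpha$ primitive, the winding identity $w_{\eta}(\alpha^2)=2w_{\eta}(\alpha)$, and the same square-root assignment $([\widetilde{\gamma}],\lambda,\pm 1)\mapsto([\alpha],\pm\lambda')$ with $(\lambda')^2=\lambda$. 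The only difference is presentational: you unpack the covering-space arguments (the antipodal $\sigma$-action on a $\sigma$-invariant representative, transport of primitivity) that the paper simply delegates to the cited proof in \cite{AmiotPlamondon}.
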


\begin{proof}
The bijections in items $(1)$ and $(2)$ are induced by $\Psi$, and we always have $w_{\widetilde{\eta}}([\widetilde{\gamma}])=w_{\eta} (\Psi[\widetilde{\gamma}])$. Hence the proof here follows from $(1)$ and $(2)$ of Corollary 5.18 in \cite{AmiotPlamondon}.

Bijection $(3)$  is constructed as follows (see proof of Corollary 5.18 in \cite{AmiotPlamondon}) :  for any $[\widetilde{\gamma}]\in\pi_1^{\rm free}(\widetilde{\surf})$ primitive such that $[\sigma\widetilde{\gamma}]=[\widetilde{\gamma}]$  there exists a primitive element $[\alpha]\in \pi_1^{\rm orb, free}(\surf)$ such that $\Psi ([\widetilde{\gamma}])=[\alpha^2].$ If $w_{\widetilde{\eta}}(\widetilde{\gamma})=0$, then $w_{\eta}(\alpha^2)=0=2w_{\eta}(\alpha)$. Thus $\alpha$ has winding number zero if and only if so does $\widetilde{\gamma}$.  We associate to $([\widetilde{\gamma}],\lambda,\pm 1)$ the element $([\alpha],\pm\lambda')$ where $\lambda'$ is a square root of $\lambda$ in $k$.

\end{proof}

Then combining Propositions  \ref{prop gradability and winding} and \ref{prop::bijection courbes S et Stilde}, one easily gets that the sets $(\widetilde{\surf}3)$ and $(\widetilde{\surf}3')$ (resp $(\widetilde{\surf}4)$, resp $(\widetilde{\surf}5)$) are in bijection respectively with

\begin{enumerate}

\item [$(\surf 3)$]$\{ ([\gamma],\mathbf{n},\lambda)\in \pi_1^{\rm orb,free, gr}(\surf)\times k^*\ |\ [\gamma]\neq [\gamma^{-1}]\}/\sim$ where $([\gamma],\mathbf{n},\lambda)\sim ([\gamma^{-1}],\mathbf{n},\lambda^{-1})$;
\item[$(\surf 4)$] $\{ ([\gamma],\mathbf{n},\lambda)\in \pi_1^{\rm orb,free, gr}(\surf)\times k^*\setminus\{\pm 1\}\ |\ [\gamma]= [\gamma^{-1}], \gamma^2\neq 0 \}/\sim$;
\item[$(\surf 5)$] $\{ ([\gamma],\mathbf{n},\epsilon,\epsilon')\in \pi_1^{\rm orb,free, gr}(\surf)\times \{\pm 1\}^2\ |\ [\gamma]= [\gamma^{-1}], \gamma^2\neq 0\}.$
\end{enumerate}

\section{Example}

\subsection{The surfaces $\surf$ and $\widetilde{\surf}$, and the algebras $A$ and $\widetilde{A}$}\label{section5.1}

Let $(\surf,M_{\gpoint},P_{\rpoint}, X_\cross)$ be a cylinder with one marked point on each boudanry component, one puncture and one orbifold point. We consider the following $\cross$-dissection $D$, together with its corresponding skew-gentle algebra.

\begin{center}\scalebox{1}{
\begin{tikzpicture}[>=stealth,scale=1]

\node at (0,0) {$\rpoint$};
\node at (0,2) {$\rpoint$};
\node at (4,0) {$\rpoint$};
\node at (3,1) {$\rpoint$};
\node at (4,2) {$\rpoint$};
\node at (1,1) {$\cross$};

\draw[red, thick] (0,2)--node[red,fill=white, inner sep=1pt]{$1$}(0,0)--node[red,fill=white, inner sep=0pt]{$3$}(4,2)--node[red,fill=white, inner sep=1pt]{$1$}(4,0);
\draw[red,thick] (0,0)--node[red, fill=white, inner sep=0pt]{$2$}(1,1);
\draw[red,thick] (4,2)--node[red,fill=white, inner sep=0pt]{$4$}(3,1);
\draw (0,0)--(4,0);
\draw (0,2)--(4,2);

\node at (2,0) {$\gpoint$};
\node at (2,2) {$\gpoint$};

\draw[dark-green,dotted] (1,1)--(2,2);

\begin{scope}[xshift=6cm, yshift=1cm, scale=0.8]
\node (1) at (0,0) {$1$};
\node (2) at (2,1) {$2$};
\node (4) at (2,-1) {$4$};
\node (3) at (4,0) {$3$};

\draw[->, thick] (1)--node[fill=white, inner sep= 1pt]{$a$} (2);
\draw[->, thick] (1)--node[fill=white, inner sep= 1pt]{$c$} (4);
\draw[->, thick] (2)--node[fill=white, inner sep= 1pt]{$b$} (3);
\draw[->, thick] (4)--node[fill=white, inner sep= 1pt]{$d$} (3);

\draw[->, thick] (1.9,1.1).. controls (1,2) and (3,2)..node[fill=white, inner sep= 1pt]{$\epsilon$} (2.1,1.1);

\draw[->, thick] (1.9,-1.1).. controls (1,-2) and (3,-2)..node[fill=white, inner sep= 1pt]{$e$} (2.1,-1.1);

\node at (7,-1) {$I=(ba,dc,e^2), \quad {\rm Sp}=\{\epsilon\}$};

\end{scope}

\end{tikzpicture}}
\end{center}

Note that the quiver $Q$ is not the Gabriel quiver of the algebra $A$. 
But one can easily check that there is an isomorphism with the algebra presented by the following quiver $\bar{Q}$ and the set of admissible relations $\bar{I}$:
\begin{center}\scalebox{1}{
\begin{tikzpicture}[>=stealth,scale=1]
\begin{scope}[xshift=6cm, yshift=1cm, scale=1]
\node (1) at (0,0) {$1$};
\node (2) at (2,2) {$2$};
\node (2') at (2,-2) {$2'$};
\node (3) at (4,0) {$3$};
\node (4) at (2,0) {$4$};

\draw[->, thick] (1)--node[fill=white, inner sep= 1pt]{$a$} (2);
\draw[->, thick] (1)--node[fill=white, inner sep= 1pt]{$a'$} (2');
\draw[->, thick] (2)--node[fill=white, inner sep= 1pt]{$b$} (3);
\draw[->, thick] (2')--node[fill=white, inner sep= 1pt]{$b'$} (3);
\draw[->, thick] (1)--node[fill=white, inner sep= 1pt]{$c$} (4);
\draw[->, thick] (4)--node[fill=white, inner sep= 1pt]{$d$} (3);

\draw[->, thick] (1.9,0.1).. controls (1,1) and (3,1)..node[fill=white, inner sep= 1pt]{$e$} (2.1,0.1);

\node at (7,0) {$\bar{I}=(ba+b'a',dc,e^2)$};

\end{scope}
\end{tikzpicture}}
\end{center}

The dissected surface $(\widetilde{\surf}, \widetilde{M}_{\gpoint},\widetilde{P}_{\rpoint}, \widetilde{D})$, and the $\bZ_2$-gentle algebra $\widetilde{A}$ associated to the skew-gentle algebra $A$ are as follows.

\begin{center}\scalebox{1}{
\begin{tikzpicture}[>=stealth,scale=1]

\node at (0,1) {$\rpoint$};
\node at (0,3) {$\rpoint$};
\node at (4,1) {$\rpoint$};
\node at (1,2) {$\rpoint$};
\node at (4,3) {$\rpoint$};
\node at (2,0) {$\cross$};

\node at (0,-1) {$\rpoint$};
\node at (0,-3) {$\rpoint$};
\node at (4,-1) {$\rpoint$};
\node at (3,-2) {$\rpoint$};
\node at (4,-3) {$\rpoint$};

\draw[red, thick] (0,3)--node[red,fill=white, inner sep=1pt]{$1^-$}(0,1)--node[red,fill=white, inner sep=0pt]{$3^-$}(4,3)--node[red,fill=white, inner sep=1pt]{$1^-$}(4,1);

\draw[red, thick] (0,-1)--node[red,fill=white, inner sep=1pt]{$1^+$}(0,-3)--node[red,fill=white, inner sep=0pt]{$3^+$}(4,-1)--node[red,fill=white, inner sep=1pt]{$1^+$}(4,-3);

\draw[red,thick] (0,1)--node[red, fill=white, inner sep=0pt]{$4^-$}(1,2);
\draw[red,thick] (4,-1)--node[red, fill=white, inner sep=0pt]{$4^+$}(3,-2);

\draw[red,thick] (4,3)--node[red,fill=white, inner sep=0pt, xshift=6pt,yshift=8pt]{$2$}(0,-3);

\draw (0,3)--(4,3);
\draw (0,1)--(0,-1);
\draw (4,1)--(4,-1);
\draw (0,-3)--(4,-3);

\node at (0,0) {$\gpoint$};
\node at (4,0) {$\gpoint$};

\draw[dark-green,dotted] (0,0)--(4,0);

\begin{scope}[xshift=6cm]
\node (1-) at (0,1) {$1^-$};
\node (1+) at (0,-1) {$1^+$};
\node (2) at (2,0) {$2$};
\node (3-) at (4,1) {$3^-$};
\node (3+) at (4,-1) {$3^+$};
\node (4-) at (2,2) {$4^-$};
\node (4+) at (2,-2) {$4^+$};

\draw[thick, ->] (1-)--node[fill=white, inner sep=0pt]{$c^-$}(4-);
\draw[thick, ->] (1+)--node[fill=white, inner sep=0pt]{$c^+$}(4+);
\draw[thick, ->] (1-)--node[fill=white, inner sep=0pt]{$a^-$}(2);
\draw[thick, ->] (1+)--node[fill=white, inner sep=0pt]{$a^+$}(2);
\draw[thick, ->] (2)--node[fill=white, inner sep=0pt]{$b^-$}(3-);
\draw[thick, ->] (2)--node[fill=white, inner sep=0pt]{$b^+$}(3+);
\draw[thick, ->] (4-)--node[fill=white, inner sep=0pt]{$d^-$}(3-);
\draw[thick, ->] (4+)--node[fill=white, inner sep=0pt]{$d^+$}(3+);

\draw[->, thick] (1.9,2.1).. controls (1,3) and (3,3)..node[fill=white, inner sep= 1pt]{$e^-$} (2.1,2.1);

\draw[->, thick] (1.9,-2.1).. controls (1,-3) and (3,-3)..node[fill=white, inner sep= 1pt]{$e^+$} (2.1,-2.1);

\node at (2,-4) {$\widetilde{I}=(b^-a^-, b^+a^+, d^-c^-, d^+c^+, (e^-)^2,(e^+)^2)$ };

\end{scope}

\end{tikzpicture}}
\end{center}

\subsection{Objects in the sets $(\surf 1)$ and $(\surf 2)$}

Let $(\gamma,\grading) \in \pi_1^{{\rm orb,gr}}(\surf,M_{\gpoint}, P_{\rpoint})$ be as follows.

\begin{center}\scalebox{1}{
\begin{tikzpicture}[>=stealth,scale=1.5]

\node at (0,0) {$\rpoint$};
\node at (0,2) {$\rpoint$};
\node at (4,0) {$\rpoint$};
\node at (3,1) {$\rpoint$};
\node at (4,2) {$\rpoint$};
\node at (1,1) {$\cross$};

\draw[red, thick] (0,2)--(0,0)--(4,2)--(4,0);
\draw[red,thick] (0,0)--(1,1);
\draw[red,thick] (4,2)--(3,1);
\draw (0,0)--(4,0);
\draw (0,2)--(4,2);

\node at (2,0) {$\gpoint$};
\node at (2,2) {$\gpoint$};

\draw[dark-green,dotted] (1,1)--(2,2);

\draw[dark-green, thick] (0,1).. controls (0.5,1) and  (1.25,0.5)..(1.25,1).. controls (1.25,1.5) and (0.5,1.5)..(0,1.5);

\draw[dark-green, thick] (2,0).. controls (2.5,0.5) and (3.5,1).. (4,1);
\draw[dark-green, thick] (2,0).. controls (2,0.5) and (3,1.5).. (4,1.5);

\node[fill=white, inner sep=0pt] at (0,1) {$0$};
\node[fill=white, inner sep=0pt] at (0,1.5) {$0$};
\node[fill=white, inner sep=0pt] at (4,1) {$0$};
\node[fill=white, inner sep=0pt] at (4,1.5) {$0$};
\node[fill=white, inner sep=0pt] at (3.5,1.5) {$1$};
\node[fill=white, inner sep=0pt] at (0.75,0.75) {$1$};

\end{tikzpicture}}\end{center}

The element $\gamma$ satisfies $\gamma^2\neq 1$, therefore $(\gamma,\grading)$ is in the set $(\surf 1)$ and has two preimages in $\pi_1^{\rm gr}(\widetilde{\surf},\widetilde{M}_{\gpoint}, \widetilde{P}_{\rpoint}).$

\begin{center}\scalebox{1}{
\begin{tikzpicture}[>=stealth,scale=1]

\node at (0,1) {$\rpoint$};
\node at (0,3) {$\rpoint$};
\node at (4,1) {$\rpoint$};
\node at (1,2) {$\rpoint$};
\node at (4,3) {$\rpoint$};
\node at (2,0) {$\cross$};

\node at (0,-1) {$\rpoint$};
\node at (0,-3) {$\rpoint$};
\node at (4,-1) {$\rpoint$};
\node at (3,-2) {$\rpoint$};
\node at (4,-3) {$\rpoint$};

\draw[red, thick] (0,3)--(0,1)--(4,3)--(4,1);

\draw[red, thick] (0,-1)--(0,-3)--(4,-1)--(4,-3);

\draw[red,thick] (0,1)--(1,2);
\draw[red,thick] (4,-1)--(3,-2);

\draw[red,thick] (4,3)--(0,-3);

\draw (0,3)--(4,3);
\draw (0,1)--(0,-1);
\draw (4,1)--(4,-1);
\draw (0,-3)--(4,-3);

\node at (0,0) {$\gpoint$};
\node at (4,0) {$\gpoint$};

\draw[dark-green,dotted] (0,0)--(4,0);

\node at (2,3) {$\gpoint$};
\node at (2,-3) {$\gpoint$};

\draw[dark-green, thick] (2,-3).. controls (2.5,-2.5) and (3.5,-2).. (4,-2);
\draw[blue, thick] (2,-3).. controls (2,-2) and (3,-1.5)..(4,-1.5);

\draw[dark-green,thick] (0,-2)--(4,1.5);
\draw[blue,thick] (0,-1.5)--(4,2);

\draw[dark-green, thick] (0,1.5).. controls (1,1.5) and (2,2)..(2,3);

\draw[blue,thick] (0,2).. controls (0.5,2) and (1.5,2.5)..(2,3);

\node[fill=white, inner sep= 0pt] at (4,-2) {$0$};
\node[fill=white, inner sep= 0pt] at (0,-2) {$0$};
\node[fill=white, inner sep= 0pt] at (4,2) {$0$};
\node[fill=white, inner sep= 0pt] at (4,1.5) {$0$};
\node[fill=white, inner sep= 0pt] at (4,-1.5) {$0$};
\node[fill=white, inner sep= 0pt] at (0,2) {$0$};
\node[fill=white, inner sep= 0pt] at (0,1.5) {$0$};
\node[fill=white, inner sep= 0pt] at (0,-1.5) {$0$};
\node[fill=white, inner sep= 0pt] at (1.5,-0.75) {$1$};
\node[fill=white, inner sep= 0pt] at (2.5,0.75) {$1$};
\node[fill=white, inner sep= 0pt] at (3.5,-1.5) {$1$};
\node[fill=white, inner sep= 0pt] at (0.5,1.5) {$0$};

\end{tikzpicture}}\end{center}

These two graded curves correspond to the following complexes in $\cD^{\rm b}(\widetilde{A})$:

$$ \xymatrix{P_{1^+}\oplus P_{1^-}\ar[rr]^{\begin{pmatrix} a^+ & a^-\\ 0& c^-\end{pmatrix}} && P_2\oplus P_{4^-}}\textrm{ and }\xymatrix{P_{1^-}\oplus P_{1^+}\ar[rr]^{\begin{pmatrix} a^- & a^+\\ 0 & c^+\end{pmatrix}} && P_2\oplus P_{4^+}}.$$

Their image through the functor $\cD^{\rm b}(\widetilde{A})\to \cD^{\rm b}(A)$ gives the following complexes which are isomorphic:

$$\xymatrix{P_1\oplus P_1\ar[rrr]^{\begin{pmatrix}a & a \\ a' & -a'\\ 0 & c\end{pmatrix}} & && P_2\oplus P_{2'}\oplus P_4},\textrm{ and }\xymatrix{P_1\oplus P_1\ar[rrr]^{\begin{pmatrix}a & a \\ -a' & a'\\ 0 & c\end{pmatrix}} & && P_2\oplus P_{2'}\oplus P_4}.$$

Take now a $(\gamma,\grading)\in\pi_1 ^{\rm orb, gr}(\surf,M_{\gpoint},P_{\rpoint})$ such that $\gamma^2=1$ as follows:

\begin{center}\scalebox{1}{
\begin{tikzpicture}[>=stealth,scale=1.5]

\node at (0,0) {$\rpoint$};
\node at (0,2) {$\rpoint$};
\node at (4,0) {$\rpoint$};
\node at (3,1) {$\rpoint$};
\node at (4,2) {$\rpoint$};
\node at (1,1) {$\cross$};

\draw[red, thick] (0,2)--(0,0)--(4,2)--(4,0);
\draw[red,thick] (0,0)--(1,1);
\draw[red,thick] (4,2)--(3,1);
\draw (0,0)--(4,0);
\draw (0,2)--(4,2);

\node at (2,0) {$\gpoint$};
\node at (2,2) {$\gpoint$};

\draw[dark-green,dotted] (1,1)--(2,2);

\draw[dark-green, thick] (0,1.25).. controls (0.5,1.25) and  (1.25,0.5)..(1.25,1).. controls (1.25,1.5) and (0.5,1.5)..(0,1.5);

\draw[dark-green, thick] (2,0).. controls (2,1) and (3.5,1.25).. (4,1.25);
\draw[dark-green, thick] (2,0).. controls (2,1) and (3,1.5).. (4,1.5);

\node[fill=white, inner sep=0pt] at (0,1.25) {$0$};
\node[fill=white, inner sep=0pt] at (0,1.5) {$0$};
\node[fill=white, inner sep=0pt] at (4,1.25) {$0$};
\node[fill=white, inner sep=0pt] at (4,1.5) {$0$};
\node[fill=white, inner sep=0pt] at (3.5,1.5) {$1$};
\node[fill=white, inner sep=0pt] at (0.85,0.85) {$1$};
\node[fill=white, inner sep=0pt] at (3.2,1.2) {$1$};

\end{tikzpicture}}\end{center}

The graded curve $(\gamma,\grading)$ is in the set $(\surf 2)$ and  has a unique preimage in $\pi_1^{\rm gr}(\widetilde{\surf},\widetilde{M}_{\gpoint},\widetilde{P}_{\rpoint})$. 

\begin{center}\scalebox{1}{
\begin{tikzpicture}[>=stealth,scale=1]

\node at (0,1) {$\rpoint$};
\node at (0,3) {$\rpoint$};
\node at (4,1) {$\rpoint$};
\node at (1,2) {$\rpoint$};
\node at (4,3) {$\rpoint$};
\node at (2,0) {$\cross$};

\node at (0,-1) {$\rpoint$};
\node at (0,-3) {$\rpoint$};
\node at (4,-1) {$\rpoint$};
\node at (3,-2) {$\rpoint$};
\node at (4,-3) {$\rpoint$};

\draw[red, thick] (0,3)--(0,1)--(4,3)--(4,1);

\draw[red, thick] (0,-1)--(0,-3)--(4,-1)--(4,-3);

\draw[red,thick] (0,1)--(1,2);
\draw[red,thick] (4,-1)--(3,-2);

\draw[red,thick] (4,3)--(0,-3);

\draw (0,3)--(4,3);
\draw (0,1)--(0,-1);
\draw (4,1)--(4,-1);
\draw (0,-3)--(4,-3);

\node at (0,0) {$\gpoint$};
\node at (4,0) {$\gpoint$};

\draw[dark-green,dotted] (0,0)--(4,0);

\node at (2,3) {$\gpoint$};
\node at (2,-3) {$\gpoint$};

\draw[dark-green, thick] (2,-3).. controls (2,-2) and (3,-1.5).. (4,-2);

\draw[dark-green,thick] (0,-2)--(4,2);

\draw[dark-green, thick] (0,2).. controls (1,1.5) and (2,2)..(2,3);

\node[fill=white, inner sep= 0pt] at (4,-2) {$0$};
\node[fill=white, inner sep= 0pt] at (0,-2) {$0$};
\node[fill=white, inner sep= 0pt] at (4,2) {$0$};

\node[fill=white, inner sep= 0pt] at (0,2) {$0$};

\node[fill=white, inner sep= 0pt] at (2,0) {$1$};

\node[fill=white, inner sep= 0pt] at (3.25,-1.75) {$1$};
\node[fill=white, inner sep= 0pt] at (0.75,1.75) {$1$};

\end{tikzpicture}}\end{center}

The corresponding object in $\cD^{\rm b}(\widetilde{A})$ is 
$$\xymatrix{P_{1^+}\oplus P_{1^-}\ar[rrr]^{\begin{pmatrix}c^+ & 0\\ a^+ & a^-\\ 0 & c^-\end{pmatrix}} &&& P_{4^+}\oplus P_2\oplus P_{4^-} }$$
Its image in $\cD^{\rm b}(A)$ is the following complex
$$\xymatrix{P_1\oplus P_1\ar[rrr]^{\begin{pmatrix}c & 0\\ a & a\\ a' & -a'\\ 0 & c \end{pmatrix}}&&&P_4\oplus P_2\oplus P_{2'}\oplus P_4},$$
which is isomorphic to the complex

$$\xymatrix{P_1\oplus P_1\ar[rrr]^{\begin{pmatrix}c & 0\\ a & 0\\ 0 & a'\\ 0 & c \end{pmatrix}}&&&P_4\oplus P_2\oplus P_{2'}\oplus P_4}$$
which clearly decomposes into the sum of two indecomposable complexes. 

\bigskip

Let  $(\gamma,\grading)\in\pi_1 ^{\rm orb, gr}(\surf,M_{\gpoint},P_{\rpoint})$ be as follows, where the curve $\gamma$ has infinite rays circle around the point in $P_\rpoint$, and where $\gamma =\gamma^{-1}$. :

\begin{center}\scalebox{0.8}{
\begin{tikzpicture}[>=stealth,scale=2]

\node at (0,0) {$\rpoint$};
\node at (0,2) {$\rpoint$};
\node at (4,0) {$\rpoint$};
\node at (3,1) {$\rpoint$};
\node at (4,2) {$\rpoint$};
\node at (1,1) {$\cross$};

\draw[red, thick] (0,2)--(0,0)--(4,2)--(4,0);
\draw[red,thick] (0,0)--(1,1);
\draw[red,thick] (4,2)--(3,1);
\draw (0,0)--(4,0);
\draw (0,2)--(4,2);

\node at (2,0) {$\gpoint$};
\node at (2,2) {$\gpoint$};

\draw[dark-green,dotted] (1,1)--(2,2);

\draw[dark-green, thick] (3,1.375)..controls (3.5,1.375) and (3.5,0.5)..(3,0.5).. controls (2.5,0.5) and (1.75,0.7).. (1.5,0.75).. controls (1.25,0.8) and  (1,0.5)..(0.75,0.75).. controls (0.25,1.25) and  (1.5,1.25).. (2,1).. controls (2.5,0.75) and (2.5, 0.75).. (3,0.75).. controls (3.5, 0.75) and (3.25,1.25).. (3,1.25) ;

\node[fill=white, inner sep=0pt] at (3.125,1.125) {$-1$};
\node[fill=white, inner sep=0pt] at (3.3,1.3) {$-1$};
\node[fill=white, inner sep=0pt] at (2,1) {$0$};
\node[fill=white, inner sep=0pt] at (1.5,0.75) {$0$};
\node[fill=white, inner sep=0pt] at (0.75,0.75) {$1$};

\end{tikzpicture}}\end{center}

The graded curve $(\gamma, \grading)$ has a unique preimage in $\pi_1^{\gr}(\widetilde{\surf},\widetilde{M}_{\gpoint},\widetilde{P}_{\rpoint})$ which is as follows :

\begin{center}\scalebox{0.8}{
\begin{tikzpicture}[>=stealth,scale=2]

\node at (0,1) {$\rpoint$};
\node at (0,3) {$\rpoint$};
\node at (4,1) {$\rpoint$};
\node at (1,2) {$\rpoint$};
\node at (4,3) {$\rpoint$};
\node at (2,0) {$\cross$};

\node at (0,-1) {$\rpoint$};
\node at (0,-3) {$\rpoint$};
\node at (4,-1) {$\rpoint$};
\node at (3,-2) {$\rpoint$};
\node at (4,-3) {$\rpoint$};

\draw[red, thick] (0,3)--(0,1)--(4,3)--(4,1);

\draw[red, thick] (0,-1)--(0,-3)--(4,-1)--(4,-3);

\draw[red,thick] (0,1)--(1,2);
\draw[red,thick] (4,-1)--(3,-2);

\draw[red,thick] (4,3)--(0,-3);

\draw (0,3)--(4,3);
\draw (0,1)--(0,-1);
\draw (4,1)--(4,-1);
\draw (0,-3)--(4,-3);

\node at (0,0) {$\gpoint$};
\node at (4,0) {$\gpoint$};

\draw[dark-green,dotted] (0,0)--(4,0);

\node at (2,3) {$\gpoint$};
\node at (2,-3) {$\gpoint$};

\draw[dark-green, thick] (1,1.75).. controls (0.5,1.75) and (0.75,2.25)..(1,2.25).. controls (1.25,2.25) and (1.75,1).. (2,0).. controls (2.25,-1) and (2.75,-2.25)..(3,-2.25).. controls (3.25,-2.25) and (3.5,-1.75)..(3,-1.75) ;

\node[fill=white, inner sep=0pt] at (1.4,1.7) {$0$};
\node[fill=white, inner sep=0pt] at (2.6,-1.7) {$0$};
\node[fill=white, inner sep=0pt] at (2,0) {$1$};
\node[fill=white, inner sep=0pt] at (0.75,1.75) {$-1$};
\node[fill=white, inner sep=0pt] at (3.25,-1.75) {$-1$};

\end{tikzpicture}}\end{center}

The corresponding object in $\cD^{\rm b}(\widetilde{A})$ is the following complex (infinite on the left):

$$\xymatrix@C=1.5cm{\cdots  P_{4^+}\oplus P_{4^{-}}\ar[r]^{\begin{pmatrix} e^+ & 0\\ 0 & e^-\end{pmatrix}} &P_{4^+}\oplus P_{4^{-}}\ar[r]^-{\begin{pmatrix} e^+ & 0\\ 0 & 0 \\ 0 & e^-\end{pmatrix}} & P_{4^+}\oplus P_2\oplus P_{4^{-}}\ar[rr]^-{\begin{pmatrix} d^+e^+ & b^+ & 0\\ 0 & b^- & d^-e^- \end{pmatrix}} && P_{3^+}\oplus P_{3^-} }$$

Its image in $\cD^{\rm b}(A)$ is the following complex

$$\xymatrix@C=1.5cm{\cdots  P_{4}^2 \ar[r]^{\begin{pmatrix} e & 0\\ 0 & e\end{pmatrix}} &P_4^2\ar[r]^-{\begin{pmatrix} e & 0\\ 0 & 0\\ 0 & 0 \\ 0 & e\end{pmatrix}} & P_{4}\oplus P_2^2\oplus P_{4}\ar[rr]^-{\begin{pmatrix} de & b & b' & 0\\ 0 & b & -b' & de \end{pmatrix}} && P_{3}^2}$$
which is isomorphic to the complex 

$$\xymatrix@C=1.5cm{\cdots  P_{4}^2 \ar[r]^{\begin{pmatrix} e & 0\\ 0 & e\end{pmatrix}} &P_4^2\ar[r]^-{\begin{pmatrix} e & 0\\ 0 & 0\\ 0 & 0 \\ 0 & e\end{pmatrix}} & P_{4}\oplus P_2^2\oplus P_{4}\ar[rr]^-{\begin{pmatrix} de & b & 0 & 0\\ 0 & 0 & b' & de \end{pmatrix}} && P_{3}^2}$$
which decomposes into the sum of two indecomposable summands.

\subsection{Objects in the sets $(\surf 3)$, $(\surf 4)$ and $(\surf 5)$}
Now let $([\gamma],\grading,\lambda)\in \pi_1^{\rm orb,free,gr}(\surf)\times k^*$ be the following graded curve

\begin{center}\scalebox{1}{
\begin{tikzpicture}[>=stealth,scale=1.5]

\node at (0,0) {$\rpoint$};
\node at (0,2) {$\rpoint$};
\node at (4,0) {$\rpoint$};
\node at (2.5,0.5) {$\rpoint$};
\node at (4,2) {$\rpoint$};
\node at (1.5,1.5) {$\cross$};

\draw[red, thick] (0,2)--(0,0)--(4,2)--(4,0);
\draw[red,thick] (0,0)--(1.5,1.5);
\draw[red,thick] (4,2)--(2.5,0.5);
\draw (0,0)--(4,0);
\draw (0,2)--(4,2);

\node at (2,0) {$\gpoint$};
\node at (2,2) {$\gpoint$};

\draw[dark-green,dotted] (1.5,1.5)--(2,2);

\draw[blue, thick] (0,1)--(4,1);
\draw[blue,thick,->] (0,1)--(0.5,1);

\node[fill=white, inner sep=0pt] at (0,1) {$0$};
\node[fill=white, inner sep=0pt] at (1,1) {$1$};
\node[fill=white, inner sep=0pt] at (3,1) {$1$};
\node[fill=white, inner sep=0pt] at (2,1) {$2$};
\node[fill=white, inner sep=0pt] at (4,1) {$0$};

\end{tikzpicture}}\end{center}

The element $([\gamma],\grading,\lambda)$ is the set $(\surf 3)$ and $[\gamma]$ is in the image of $\Psi$ (indeed it intersects the green dotted lines an even number of times).

The graded curve $([\gamma],\grading)$ has two preimages in $\pi_1^{\rm free,gr}(\widetilde{\surf})$ (that are in the set $(\widetilde{\surf}3)$ which are as follows:

\begin{center}\scalebox{1}{
\begin{tikzpicture}[>=stealth,scale=1]

\node at (0,1) {$\rpoint$};
\node at (0,3) {$\rpoint$};
\node at (4,1) {$\rpoint$};
\node at (1.5,2.5) {$\rpoint$};
\node at (4,3) {$\rpoint$};
\node at (2,0) {$\cross$};

\node at (0,-1) {$\rpoint$};
\node at (0,-3) {$\rpoint$};
\node at (4,-1) {$\rpoint$};
\node at (2.5,-2.5) {$\rpoint$};
\node at (4,-3) {$\rpoint$};

\draw[red, thick] (0,3)--(0,1)--(4,3)--(4,1);
\draw[red, thick] (0,-1)--(0,-3)--(4,-1)--(4,-3);

\draw[red,thick] (0,1)--(1.5,2.5);
\draw[red,thick] (4,-1)--(2.5,-2.5);

\draw[red,thick] (4,3)--(0,-3);

\draw (0,3)--(4,3);
\draw (0,1)--(0,-1);
\draw (4,1)--(4,-1);
\draw (0,-3)--(4,-3);

\node at (0,0) {$\gpoint$};
\node at (4,0) {$\gpoint$};

\draw[dark-green,dotted] (0,0)--(4,0);

\node at (2,3) {$\gpoint$};
\node at (2,-3) {$\gpoint$};

\draw[blue,thick,->] (0,-2)--(4,-2);
\draw[dark-green, thick] (0,2)--(4,2);

\draw[dark-green,thick,->] (1,2)--(0.5,2);
\draw[blue,thick,->] (1,-2)--(1.5,-2);

\node[fill=white, inner sep= 0pt] at (4,-2) {$0$};
\node[fill=white, inner sep= 0pt] at (0,-2) {$0$};
\node[fill=white, inner sep= 0pt] at (4,2) {$0$};
\node[fill=white, inner sep= 0pt] at (0,2) {$0$};

\node[fill=white, inner sep= 0pt] at (1,2) {$1$};
\node[fill=white, inner sep= 0pt] at (2,2) {$2$};
\node[fill=white, inner sep= 0pt] at (3.4,2) {$1$};

\node[fill=white, inner sep= 0pt] at (0.6,-2) {$1$};
\node[fill=white, inner sep= 0pt] at (2,-2) {$2$};
\node[fill=white, inner sep= 0pt] at (3,-2) {$1$};

\end{tikzpicture}}\end{center}

These graded curves correspond respectively to the following objects in $\cD^{\rm b}(\widetilde{A})$:

$$\xymatrix{P_{1^+}\ar[r]^-{\begin{pmatrix}\lambda a^+\\ d^+\end{pmatrix}} & P_2\oplus P_{4^+} \ar[rr]^{\begin{pmatrix}b^+ & c^+\end{pmatrix}}& &P_{3^+}}\textrm{ and }\xymatrix{P_{1^-}\ar[r]^-{\begin{pmatrix}\lambda a^-\\ d^-\end{pmatrix}} & P_2\oplus P_{4^-} \ar[rr]^{\begin{pmatrix}b^- & c^-\end{pmatrix}}& &P_{3^-}}$$

The corresponding complexes in $\cD^{\rm b}(A)$ are 
$$\xymatrix{P_1\ar[r]^-{\begin{pmatrix}\lambda a\\ \lambda a'\\ d\end{pmatrix}} & P_2\oplus P_{2'}\oplus P_4\ar[rr]^-{\begin{pmatrix}b & b'& c\end{pmatrix}} & & P_3}\textrm{ and }\xymatrix{P_1\ar[r]^-{\begin{pmatrix}\lambda a\\ -\lambda a'\\ d\end{pmatrix}} & P_2\oplus P_{2'}\oplus P_4\ar[rr]^-{\begin{pmatrix}b & -b'& c\end{pmatrix}} & & P_3}$$
which are isomorphic.

\medskip

Let $([\gamma],\grading,\lambda)\in \pi_1^{\rm orb,free,gr}(\surf)\times k^*$ be the following graded curve. It is in the set $(\surf 3)$ and $[\gamma]$ is not the image of $\Psi$ since it intersects the green dotted lines an odd number of times.

\begin{center}\scalebox{1}{
\begin{tikzpicture}[>=stealth,scale=1.5]

\node at (0,0) {$\rpoint$};
\node at (0,2) {$\rpoint$};
\node at (4,0) {$\rpoint$};
\node at (3,1) {$\rpoint$};
\node at (4,2) {$\rpoint$};
\node at (1,1) {$\cross$};

\draw[red, thick] (0,2)--(0,0)--(4,2)--(4,0);
\draw[red,thick] (0,0)--(1,1);
\draw[red,thick] (4,2)--(3,1);
\draw (0,0)--(4,0);
\draw (0,2)--(4,2);

\node at (2,0) {$\gpoint$};
\node at (2,2) {$\gpoint$};

\draw[dark-green,dotted] (1,1)--(2,2);
\draw[blue, thick] (0,1)..controls (1,2) and (3,0)..(4,1);

\node[fill=white, inner sep=0pt] at (0,1) {$0$};
\node[fill=white, inner sep=0pt] at (2,1) {$1$};
\node[fill=white, inner sep=0pt] at (4,1) {$0$};

\end{tikzpicture}}\end{center}

However, the concatenation of its two preimages is in the set $(\widetilde{\surf} 3')$, and is a primitive closed curve as follows:

 \begin{center}\scalebox{1}{
\begin{tikzpicture}[>=stealth,scale=1]

\node at (0,1) {$\rpoint$};
\node at (0,3) {$\rpoint$};
\node at (4,1) {$\rpoint$};
\node at (1,2) {$\rpoint$};
\node at (4,3) {$\rpoint$};
\node at (2,0) {$\cross$};

\node at (0,-1) {$\rpoint$};
\node at (0,-3) {$\rpoint$};
\node at (4,-1) {$\rpoint$};
\node at (3,-2) {$\rpoint$};
\node at (4,-3) {$\rpoint$};

\draw[red, thick] (0,3)--(0,1)--(4,3)--(4,1);

\draw[red, thick] (0,-1)--(0,-3)--(4,-1)--(4,-3);

\draw[red,thick] (0,1)--(1,2);
\draw[red,thick] (4,-1)--(3,-2);

\draw[red,thick] (4,3)--(0,-3);

\draw (0,3)--(4,3);
\draw (0,1)--(0,-1);
\draw (4,1)--(4,-1);
\draw (0,-3)--(4,-3);

\node at (0,0) {$\gpoint$};
\node at (4,0) {$\gpoint$};

\draw[dark-green,dotted] (0,0)--(4,0);

\node at (2,3) {$\gpoint$};
\node at (2,-3) {$\gpoint$};

\draw[blue,thick] (0,2).. controls (2,4) and (2,0).. (0,-2);

\draw[blue,thick] (4,-2).. controls (2,-4) and (2,0).. (4,2);

\node[fill=white, inner sep= 0pt] at (0,2) {$0$};
\node[fill=white, inner sep= 0pt] at (1.5,1.8) {$1$};
\node[fill=white, inner sep= 0pt] at (0,-2) {$0$};
\node[fill=white, inner sep= 0pt] at (4,-2) {$0$};
\node[fill=white, inner sep= 0pt] at (2.5,-1.8) {$1$};
\node[fill=white, inner sep= 0pt] at (4,2) {$0$};

\end{tikzpicture}}\end{center}

The corresponding band object in $\cD^{\rm b}(\widetilde{A})$ is given by 
$$\xymatrix{P_{1^+}\oplus P_{1^-}\ar[rrrr]^{\begin{pmatrix}\lambda b^-a^+ & d^-e^-c^-\\ d^+e^+c^+ & b^+a^-\end{pmatrix}} &&&& P_{3^-}\oplus P_{3^+}},$$
whose image in $\cD^{\rm b}(A)$ is 

$$\xymatrix{P_1\oplus P_1\ar[rrrr]^{\begin{pmatrix}\lambda(ba-b'a') & dec\\ dec & ba-b'a' \end{pmatrix}} & && & P_3\oplus P_3}$$ which can be shown to be isomorphic to 

$$\left(\xymatrix{P_1\ar[rrr]^{\lambda'(ba-b'a') +dec} && & P_3}\right)\oplus \left(\xymatrix{P_1\ar[rrr]^{-\lambda'(ba-b'a') +dec} && & P_3} \right)$$
 where $(\lambda')^2=\lambda$.
 
 \medskip

Finally, let $([\gamma],\grading)\in \pi_1^{\rm orb, free,gr}(\surf)$ be such that $[\gamma]=[\gamma^{-1}]$ is as follows

\begin{center}\scalebox{1}{
\begin{tikzpicture}[>=stealth,scale=1.5]

\node at (0,0) {$\rpoint$};
\node at (0,2) {$\rpoint$};
\node at (4,0) {$\rpoint$};
\node at (3,1) {$\rpoint$};
\node at (4,2) {$\rpoint$};
\node at (1,1) {$\cross$};

\draw[red, thick] (0,2)--(0,0)--(4,2)--(4,0);
\draw[red,thick] (0,0)--(1,1);
\draw[red,thick] (4,2)--(3,1);
\draw (0,0)--(4,0);
\draw (0,2)--(4,2);

\node at (2,0) {$\gpoint$};
\node at (2,2) {$\gpoint$};

\draw[dark-green,dotted] (1,1)--(2,2);

\draw[blue, thick] (0,1).. controls (0.5,1) and  (1.25,1.5)..(1.25,1).. controls (1.25,0.5) and (0.5,0.5)..(0,0.5);

\draw[blue,thick] (4,1).. controls (3.5,1) and (2.5,0.5).. (2,1)..controls (1.5,1.5) and (0.75,1.5)..(0.75,1).. controls (0.75,0.5) and (3,0.5).. (4,0.5);

\node[fill=white, inner sep=0pt] at (0,1) {$0$};
\node[fill=white, inner sep=0pt] at (4,1) {$0$};

\node[fill=white, inner sep=0pt] at (2,1) {$1$};
\node[fill=white, inner sep=0pt] at (0.5,0.5) {$1$};
\node[fill=white, inner sep=0pt] at (0,0.5) {$0$};
\node[fill=white, inner sep=0pt] at (0.85,0.85) {$0$};
\node[fill=white, inner sep=0pt] at (1.3,0.7) {$1$};
\node[fill=white, inner sep=0pt] at (4,0.5) {$0$};

\end{tikzpicture}}\end{center}

The closed curve $[\gamma]$ is in the image of $\Psi$ and its preimage is unique as follows:

 \begin{center}\scalebox{1}{
\begin{tikzpicture}[>=stealth,scale=1]

\node at (0,1) {$\rpoint$};
\node at (0,3) {$\rpoint$};
\node at (4,1) {$\rpoint$};
\node at (1,2) {$\rpoint$};
\node at (4,3) {$\rpoint$};
\node at (2,0) {$\cross$};

\node at (0,-1) {$\rpoint$};
\node at (0,-3) {$\rpoint$};
\node at (4,-1) {$\rpoint$};
\node at (3,-2) {$\rpoint$};
\node at (4,-3) {$\rpoint$};

\draw[red, thick] (0,3)--(0,1)--(4,3)--(4,1);
\draw[red, thick] (0,-1)--(0,-3)--(4,-1)--(4,-3);
\draw[red,thick] (0,1)--(1,2);
\draw[red,thick] (4,-1)--(3,-2);
\draw[red,thick] (4,3)--(0,-3);

\draw (0,3)--(4,3);
\draw (0,1)--(0,-1);
\draw (4,1)--(4,-1);
\draw (0,-3)--(4,-3);

\node at (0,0) {$\gpoint$};
\node at (4,0) {$\gpoint$};

\draw[dark-green,dotted] (0,0)--(4,0);

\node at (2,3) {$\gpoint$};
\node at (2,-3) {$\gpoint$};

\draw[blue,thick] (0,-2)--(4,2);
\draw[blue,thick] (0,2).. controls (2,4) and (2.5,1.5)..(2.5,0.75)..controls (2.5,0) and (1,-4).. (4,-2);

\node[fill=white, inner sep= 0pt] at (0,2) {$0$};
\node[fill=white, inner sep= 0pt] at (2.2,2) {$1$};
\node[fill=white, inner sep= 0pt] at (2.5,0.75) {$0$};
\node[fill=white, inner sep= 0pt] at (2.2,-1.8) {$1$};
\node[fill=white, inner sep= 0pt] at (4,-2) {$0$};
\node[fill=white, inner sep= 0pt] at (0,-2) {$0$};
\node[fill=white, inner sep= 0pt] at (2,0) {$1$};
\node[fill=white, inner sep= 0pt] at (4,2) {$0$};

\end{tikzpicture}}\end{center}
 The corresponding complex in $\cD^{\rm b}(\widetilde{A})$ is the following
 $$\xymatrix{P_{1^+}\oplus P_{1^-}\oplus P_2\ar[rrrr]^{\begin{pmatrix}\lambda a^+ & a^- & 0\\0 &  d^-e^-c^- & b^-\\ d^+e^+c^+ & 0& b^+\end{pmatrix}}&&&& P_2\oplus P_{3^-}\oplus P_{3^+}}.$$

Its image in $\cD^{\rm b}(A)$ is the following complex
$$\xymatrix{P_1^2\oplus P_2\oplus P_{2'}\ar[rrrrr]^{\begin{pmatrix}\lambda a & a & 0 & 0\\ \lambda a' & -a' & 0 & 0\\ 0 & dec & b & -b'\\ dec & 0 & b & b' \end{pmatrix}} &&&&& P_{2}\oplus P_{2'}\oplus P_3^2}$$

For $\lambda\neq\pm 1$ (so in the case where $([\gamma],\grading,\lambda)$ is in the set $(\surf 4)$), this complex is indecomposable. 

\medskip

For $\lambda= 1$ this complex is isomorphic to 
$$\xymatrix{P_1^2\oplus P_2\oplus P_{2'}\ar[rrrrr]^{\begin{pmatrix} a & 0 & 0 & 0\\ 0  & a' & 0 & 0\\ 0 & dec & 0 & b'\\ dec & 0 & b & 0 \end{pmatrix}} &&&&& P_{2}\oplus P_{2'}\oplus P_3^2}$$
which decomposes.

For $\lambda=-1$ this complex is isomorphic to 
$$\xymatrix{P_1^2\oplus P_2\oplus P_{2'}\ar[rrrrr]^{\begin{pmatrix} a & 0 & 0 & 0\\ 0  & a' & 0 & 0\\ 0 & dec & b & 0\\ dec & 0 & 0 & b' \end{pmatrix}} &&&&& P_{2}\oplus P_{2'}\oplus P_3^2}$$

which also decomposes. Hence we obtain four non isomorphic indecomposable corresponding to the element $([\gamma],\grading)\in\pi_1^{\rm orb,free,gr}(\surf)$ as mentionned in the set $(\surf 5)$.


\begin{thebibliography}{alpha}

\bibitem[AP]{AmiotPlamondon} C.Amiot and P.-G.Plamondon,  The cluster category of a surface with punctures via group actions, arXiv:1707.01834, to appear in Advances in Mathematics.

\bibitem[APS]{AmiotPlamondonSchroll} C.Amiot, P.-G.Plamondon, and S.Schroll,   A complete derived invariant for gentle algebras via winding numbers and Arf invariants, arXiv:1904.02555.

\bibitem[AB]{AmiotBrustle}
C. Amiot and T. Br\"ustle, \emph{Derived equivalences between skew-gentle algebras using orbifolds}, arXiv 1912.04367

\bibitem[Ass82]{Assem}
I.~Assem, \emph{Tilted algebras of type $A_{n}$},  Comm. Algebra  \textbf{10}  (1982), no. 19, 2121--2139. 

\bibitem[AS87]{AssemSkowronski}
I.~Assem, and A.~Skowro\'nski,  \emph{Iterated tilted algebras of type $\tilde{A}_n$}.  Math. Z.  \textbf{195}  (1987),  no. 2, 269--290.

\bibitem[ABCP10]{ABCP} I. Assem, Th. Br\"ustle, G. Charbonneau-Jodoin, P.-G. Plamondon, Gentle algebras arising from surface triangulations,
Algebra \& Number Theory 4 (2), 201--229.

\bibitem[BC]{BaurCoelho} Karin Baur and Raquel Coelho Sim\~oes, A geometric model for the module category of a gentle algebra.
arXiv:1803.05802 [math.RT], to appear in IMRN.

\bibitem[BM03]{BekkertMerklen}
V.~Bekkert and H.~A. Merklen.
\newblock Indecomposables in derived categories of gentle algebras.
\newblock {\em Algebr. Represent. Theory}, 6(3):285--302, 2003.

\bibitem[BMM03]{BekkertMarcosMerklen} V. Bekkert, E. Marcos, H. Merklen, {\em Indecomposables in derived categories of skewed–gentle algebras,}
Comm. Algebra 31 (2003), no. 6, 2615--2654.

\bibitem[BZ11]{BrustleZhang} Th. Br\"ustle and J. Zhang, On the cluster category of a marked surface without punctures,
Algebra \& Number Theory 5 (4), 529-566.

\bibitem[BD]{BurbanDrozd}
I. Burban and Y. Drozd. \emph{On the derived categories of gentle and skew-gentle algebras: homological algebra
and matrix problems.} Preprint, arXiv:1706.08358, 2017.

 \bibitem[GePe99]{GeissdelaPena} Ch. Geiss, J. de la Pe\~na, {\em Auslander–Reiten components for clans,} Bol. Soc. Mat. Mexicana (3) 5 (1999), no. 2, 307–326.
 
  \bibitem[HKK]{HaidenKatzarkovKontsevich} F. Haiden, L. Katzarkov, and M. Kontsevich. Flat surfaces and stability structures. Publ.
Math. Inst. Hautes \'Etudes Sci., 126:247--318, 2017.

 
 \bibitem[LSV]{LabardiniSchrollValdivieso}
D. Labardini-Fragoso, S. Schroll and Y. Valdivieso, \emph{Derived category of Skew-gentle algebras and orbifolds}, arXiv:2006.05836

\bibitem[OPS]{OpperPlamondonSchroll} S.Opper, P.-G.Plamondon, S.Schroll,
 A geometric model for the derived category of gentle algebras, arXiv:1801.09659.
 
 \bibitem[Opp]{Opper}
S.~Opper.
\newblock On auto-equivalences and complete derived invariants of gentle
  algebras.
\newblock arXiv:1904.04859 [math.RT], 2019.
 
  \bibitem[RR85]{ReitenRiedtmann}
 I. Reiten and C. Riedtmann, \emph{Skew group algebras in the representation theory of artin algebras}.
J. Algebra, 92(1):224--282, 1985.
 
 \end{thebibliography}
\end{document}